\DeclareMathAlphabet{\mathpzc}{OT1}{pzc}{m}{it}
\newcommand{\cC}{{\mathcal C}}
\newcommand{\cS}{{\mathcal S}}
\newcommand{\pa}{\parallel}
\newcommand{\cd}{\cdot}
\newcommand{\Vi}{V_\infty}
\newcommand{\li}{l_\infty}
\newcommand{\la}{\lambda}
\newtheorem{thm}{Theorem}[section]
\newtheorem{lem}[thm]{Lemma}
\newtheorem{prop}[thm]{Proposition}
\newtheorem{cor}[thm]{Corollary}
\begin{document}

\renewcommand{\thefootnote}{\arabic{footnote}}
 	
\title{Synthetic foundations of cevian geometry, II:\\The center of the cevian conic}

\author{\renewcommand{\thefootnote}{\arabic{footnote}}
Igor Minevich and Patrick Morton\footnotemark[1]}
\footnotetext[1]{The authors were supported by an Honors Research Fellowship from the IUPUI Honors Program from 2007-2009, during the period in which this article was written.  The results of this paper were part of an undergraduate research project.}
\maketitle

\begin{section}{Introduction.}
\end{section}

In this paper we continue the investigation begun in Part I \cite{mm1} , and study the conic $\mathcal{C}_P=ABCPQ$ on the five points $A, B, C, P, Q$, where $Q=K \circ \iota(P)$ is the isotomcomplement of $P$, defined to be the complement of the isotomic conjugate of $P$ with respect to $ABC$.  Here, as in Part I, $P$ is any point not on the extended sides of $ABC$ or its anticomplementary triangle.  This conic is defined whenever $P$ does not lie on a median of triangle $ABC$.  We show first that the symmetrically defined points $P'=\iota(P)$ and $Q'= K \circ \iota(P')=K(P)$ also lie on this conic, as well as 6 other points that can be given explicitly (see Theorem 2.1 and Figure 1).  \medskip

Recall from Part I that the map $T_P$ is the unique affine map which takes triangle $ABC$ to the cevian triangle $DEF$ of $P$ with respect to $ABC$.  The map $T_{P'}$ is defined in the same way for the point $P'$.  In Theorem 2.4 we show that if $P$ and $P'$ are ordinary points and do not lie on a median of $ABC$, then
\begin{equation}
\eta T_P=T_{P'} \eta,
\end{equation}
where $\eta$ is the harmonic homology (affine reflection, see \cite{cox3}) whose center is the infinite point $V_\infty$ on the line $PP'$ and whose line of fixed points is the line $GV$, where $G$ is the centroid of $ABC$ and $V=PQ \cdot P'Q'$.  Thus, $T_P$ and $T_{P'}$ are conjugate maps in the affine group.  We prove this formula synthetically by proving an interesting relationship between the centroids $G_1$ and $G_2$ of the cevian triangles $DEF$ and $D_3E_3F_3$ of the points $P$ and $P'$, respectively.  Lemma 2.5 shows that $G$ is the midpoint of the segment $G_1G_2$ and $\eta(G_1)=G_2$.  \medskip

After we introduce the affine map
$$\lambda= T_{P'} \circ T_P^{-1}$$
in Section 3, where $T_P$ and $T_{P'}$ are the affine maps considered in Part I, we show that the 6 points on $\cC_P$ mentioned above are the images of the vertices $A, B, C$ under $\lambda$ and $\lambda^{-1}$ (Theorem 3.4).  The mapping $\lambda$ leaves the conic $\mathcal{C}_P$ invariant as a set (Theorem 3.2), and is the main affine map considered in this paper.  The relation (1) allows us to write the map $\lambda$ as
$$\lambda=\eta \circ (T_P \circ \eta \circ T_P^{-1})=\eta_1 \circ \eta_2,$$
where both maps $\eta_1$ and $\eta_2$ are harmonic homologies.  Using this representation we prove in Theorem 4.1 that the center $Z=Z_P$ of the conic $\mathcal{C}_P$ is the intersection
$$Z=GV \cdot T_P(GV),$$
and that when $\mathcal{C}_P$ is a parabola or an ellipse, $Z$ is the unique fixed point of $\lambda$ in the extended plane.  When $\mathcal{C}_P$ is a hyperbola, $Z$ is the unique ordinary fixed point of $\lambda$.  In the latter case, $\lambda$ also fixes the two points at infinity on the asymptotes of $\mathcal{C}_P$.  \medskip

At the end of the paper we interpret the mapping $\lambda$ as an isometry on the model of hyperbolic geometry whose points are the interior points of the conic $\mathcal{C}_P$ and whose lines are Euclidean chords.  \medskip

In Part III \cite{mm3} of this series of papers we will prove that the point $Z$ is a generalized Feuerbach point.  In particular, this will show that $Z$ is the Feuerbach point of triangle $ABC$ when $P$ is the Gergonne point and $P'$ is the Nagel point of $ABC$ (see [ac] and [ki]).  Our Theorem 4.1 therefore gives a representation of the Feuerbach point as the intersection of two lines.  Corollary 4.2 shows that these two lines are $GV$ and $T_P(GV)=G_1J$, where $J$ is the midpoint of $PQ$.  A third line through $Z$ is the line $G_2J'$, where $G_2=T_{P'}(G)$ and $J'$ is the midpoint of $P'Q'$.  (See Figure 5 in Section 4.)  In this case $\mathcal{C}_P$ is a hyperbola, and the Feuerbach point $Z$ is the unique ordinary fixed point of $\lambda=T_{P'} \circ T_P^{-1}$.  \medskip

We mention one more fact that we prove along the way.  The mapping
$$\mathcal{S}'= T_{P'}T_PT_{P'}^{-1}T_P^{-1}$$
is always a translation, which implies (Corollary 2.8 and Figure 4) that the triangles $A_3B_3C_3=T_P(D_3E_3F_3)$ and $A_3' B_3' C_3'=T_{P'}(DEF)$ are always congruent triangles. \medskip

We refer to Part I \cite{mm1} for the notation that we use throughout this series of papers; to [ac], [wo], [y1], or [y2] for definitions in triangle geometry; and to [cox1] and [cox2] for classical results from projective geometry.

\bigskip

\bigskip

\begin{section}{The conic $ABCPQ$ and the affine mapping $T_P$.}

We start by proving

\begin{thm}\label{thm:3.1} If $P$ does not lie on the sides of triangle $ABC$ or its anticomplementary triangle, and not on a median of $ABC$, then there is a conic $\cC_P$ on the points $A, B, C, P, Q, P', Q'$. This conic also passes through the points
\begin{equation}
\label{eqn:3.1}
A_0P\cd D_0Q', \quad B_0P\cd E_0Q', \quad C_0P\cd F_0Q',
\end{equation}
and
\[A'_0P'\cd D_0Q, \quad B'_0P'\cd E_0Q, \quad C'_0P'\cd F_0Q.\]
\end{thm}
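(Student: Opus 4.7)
Plan. The theorem has two parts: (a) $P',Q'\in\cC_P$, and (b) each of the six points in \eqref{eqn:3.1} lies on $\cC_P$. I would handle these separately.

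For (a), I would work in barycentric coordinates with $P=(p{:}q{:}r)$. Then $P'=\iota(P)=(qr{:}rp{:}pq)$, $Q'=K(P)=(q{+}r{:}r{+}p{:}p{+}q)$, and $Q=K\circ\iota(P)=(p(q{+}r){:}q(r{+}p){:}r(p{+}q))$. Any conic through $A,B,C$ has equation $\alpha yz+\beta zx+\gamma xy=0$, and passage through $P$ is the single linear condition
\[
\text{(i)}\qquad \alpha qr+\beta rp+\gamma pq=0.
\]
The key computation expands the conic equation at $Q$ and rearranges it as
\[
pqr(\alpha p+\beta q+\gamma r)\;+\;(pq+qr+rp)(\alpha qr+\beta rp+\gamma pq)\;=\;0,
\]
so that passage through $Q$, combined with (i), forces the companion condition
\[
\text{(ii)}\qquad \alpha p+\beta q+\gamma r=0.
\]
But (ii) is exactly the condition that $P'=(qr{:}rp{:}pq)$ lie on the conic, since substituting those coordinates into $\alpha yz+\beta zx+\gamma xy$ produces $pqr(\alpha p+\beta q+\gamma r)$. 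Hence $P'\in\cC_P$ is automatic. A parallel substitution at $Q'=(q{+}r{:}r{+}p{:}p{+}q)$ gives $\alpha p^2+\beta q^2+\gamma r^2+(\alpha+\beta+\gamma)(pq+qr+rp)$, and this reduces via (ii) back to the left side of (i), so $Q'\in\cC_P$ as well.

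For (b), my first approach would be the converse of Pascal's theorem. For each of the six intersection points --- say $X=A_0P\cdot D_0Q'$ --- the aim is to exhibit a hexagon whose other five vertices are among the seven points already known to lie on $\cC_P$, arranged so that the two defining lines $A_0P$ and $D_0Q'$ appear as opposite sides of the hexagon. Membership of $X$ on $\cC_P$ then reduces, by converse Pascal, to the collinearity of three auxiliary intersection points. Given that $A_0,B_0,C_0$ are the vertices of the anticomplementary triangle, $D_0,E_0,F_0$ are the midpoints of the sides of $ABC$, and $A_0',B_0',C_0'$ are the analogous construction for $P'$ (as recalled from Part I), these collinearities should follow from Desargues or a short Menelaus argument on the cevian triangles $DEF$ and $D_3E_3F_3$. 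The cyclic $A\to B\to C$ symmetry within each of the two triples in \eqref{eqn:3.1} suggests that a single configuration handles each triple at once.

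The main obstacle is part (b). Part (a) collapses to a short and rather striking algebraic identity, but the real geometric content lies in organizing the Pascal hexagons in part (b) so that the required collinearities arise from a single underlying incidence configuration tying together the medial, anticomplementary, and cevian triangles. If no clean synthetic structure emerges, a coordinate computation is always available as a fallback --- each of the twelve lines has a linear barycentric equation, the six intersection points can be written down explicitly, and direct substitution into the conic defined by (i) and (ii) finishes the verification --- though this would forgo the geometric insight that the authors' synthetic approach elsewhere in the paper seems to favor.
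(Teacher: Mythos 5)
Your part (a) is correct and checks out: the identity expressing the condition ``$Q\in\cC$'' as $pqr(\alpha p+\beta q+\gamma r)+(pq+qr+rp)(\alpha qr+\beta rp+\gamma pq)=0$ is right, and together with $pqr\neq 0$ it does force condition (ii), which is exactly membership of $P'$; the reduction for $Q'$ (multiply (ii) by $p+q+r$ and subtract) also works. This is a genuinely different route from the paper, which never writes down an equation: the paper generates $\cC_P$ by Steiner's theorem as the locus of $x\cdot T_P(x)$, where $x$ runs over the pencil of lines on $Q'$ and $T_P(Q')=P$, using the ``not on a median'' hypothesis to verify that the two pencils are not perspective (so the locus is a nondegenerate conic rather than a line pair). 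Your coordinate argument does not use that hypothesis and consequently does not establish nondegeneracy, which the rest of the paper relies on (poles, polars, self-polar triangles); for the bare statement ``there is a conic on the seven points'' this is tolerable, but it should at least be flagged.

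The genuine gap is part (b), which you explicitly leave as a plan. The converse-Pascal strategy is never executed: no hexagon is specified, and the ``auxiliary collinearities'' that would have to be verified are not identified, let alone proved. Worse, the plan rests on a misreading of the notation: $A_0,B_0,C_0$ are not the vertices of the anticomplementary triangle but the midpoints of the sides $EF, FD, DE$ of the cevian triangle of $P$ (and $A_0',B_0',C_0'$ the analogous midpoints for $P'$), so the incidence configuration you propose to analyze is not the right one. The point you are missing is that part (b) is an immediate by-product of the paper's Steiner construction and costs nothing once the conic is realized as $\{\,x\cdot T_P(x)\,\}$: since $T_P$ is affine with $T_P(B)=E$, $T_P(C)=F$, it sends the midpoint $D_0$ of $BC$ to the midpoint $A_0$ of $EF$, hence sends the line $x=D_0Q'$ to $y=A_0P$, and the intersection $A_0P\cdot D_0Q'$ lies on the conic by definition of the locus; the second triple follows by the same argument with $T_{P'}$. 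By choosing a coordinate definition of $\cC_P$ in part (a) you forfeit exactly the mechanism that makes part (b) trivial, and the fallback computation you mention (twelve line equations, six intersections, substitution) is real work that the proposal has not done.
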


\begin{proof}
The condition that $P$ is not on a median of $ABC$ ensures that the points $P$ and $Q$ are not collinear with one of the vertices. If $P, Q$, and $A$ are collinear, for example, then the points $D$ and $D_2$ coincide, so $A_1$ and $A_2$ coincide, meaning that $Q'$ is collinear with $A$ and $G$ (by I, Theorem 3.5). But $P$ is collinear with $K(P) = Q'$ and $G$, so $P$ would lie on $AG$. By the same reasoning, $P$, $Q'$, and $A$ are not collinear and neither are $P, P'$ and $A$.\\

Now the mapping $T_P$ is a projective mapping which takes the pencil of lines $x$ on $Q'$ to the pencil $y = T_P(x)$ on $P$, since $T_P(Q') = P$. For the lines $x = AQ'$ and $y = DP = AP$ we have $x\cd y = A$; while $x = BQ'$ and $y = EP = BP$ give $x\cd y = B$; and $x = CQ'$ and $y = FP = CP$ give $x\cd y = C$. Thus the pencil $x$ is not perspective to the pencil $y$, so Steiner's theorem [cox2, p. 80] implies that the locus of points $x \cd y$ is the conic $ABCPQ'$. If $x = QQ'$ then $y = QP$, so $x \cd y = Q$ is also on this conic. Hence the conic $ABCPQ' = ABCQQ'$. Arguing the same with the mapping $T_{P'}$ shows that there is a conic $ABCP'Q = ABCQQ'$. It follows that the conic $\cC_P = ABCPQ'$ lies on $P'$ and $Q$, so that $\cC_P = ABCPQ$. This proves the first assertion. Letting $x = D_0Q'$ gives $y = A_0P$, so $A_0P\cd D_0Q'$ is on $\cC_P$, as are all the other listed intersections.
\end{proof}

\begin{cor}\label{cor:3.1}
\begin{enumerate}[(a)]
\item If $Y$ is any point on the conic $\cC_P$ other than $P, Q'$ (respectively $Q, P'$), then $T_P(Q'Y) = PY$ (resp. $T_{P'}(QY) = P'Y$).
\item The conic $\mathcal{C}_P$ is the locus of points $Y$ for which $P, Y$, and $T_P(Y)$ are collinear.
\item In particular, $P, P', T_P(P')$, and $T_{P'}(P)$ are collinear (whether $P$ lies on a median or not).
\end{enumerate}
\end{cor}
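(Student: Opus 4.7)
My plan is to derive all three parts from the Steiner-conic representation of $\cC_P$ used in the proof of Theorem~\ref{thm:3.1}.

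For (a), recall that in that proof the conic $\cC_P$ appeared as the locus of intersections $x \cd T_P(x)$ as $x$ ranges over the pencil of lines on $Q'$. Given $Y \in \cC_P$ distinct from $Q'$ and from $P$, the line $x$ that produces $Y$ must coincide with $Q'Y$ (since $Y$ lies on $x$ and $Y \ne Q'$), and its image $T_P(x)$ must coincide with $PY$ (since $Y$ lies on $T_P(x)$ and $Y \ne P$). This yields $T_P(Q'Y) = PY$. The $T_{P'}$-statement is identical, applied to the Steiner construction with pencils on $Q$ and $P'$.

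For (b), the forward implication follows from (a): for $Y \in \cC_P \setminus \{P,Q'\}$ we have $T_P(Y) \in T_P(Q'Y) = PY$, and the cases $Y = P$, $Y = Q'$ are trivial since $T_P(Q') = P$. Conversely, suppose $P, Y, T_P(Y)$ are collinear. If $Y = Q'$ there is nothing to prove, so assume $Y \ne Q'$ and set $x = Q'Y$. The image $T_P(x)$ passes through $T_P(Q') = P$ and through $T_P(Y)$; since $Y \ne Q'$ forces $T_P(Y) \ne P$, and since $T_P(Y)$ lies on $PY$ by hypothesis, we conclude $T_P(x) = PY$, so $Y = x \cd T_P(x) \in \cC_P$.

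For (c) in the generic case (where $P$ does not lie on a median), I apply (b) with $Y = P'$ to obtain collinearity of $P, P', T_P(P')$. Then I observe $\cC_{P'} = \cC_P$: since isotomic conjugation is an involution, $K\iota(P') = K(P) = Q'$ and $K(P') = Q$, so by Theorem~\ref{thm:3.1} applied to $P'$ the conic $\cC_{P'}$ also passes through $A, B, C, P, Q, P', Q'$. Applying the symmetric form of (b) to $Y = P \in \cC_{P'}$ then gives $P', P, T_{P'}(P)$ collinear, placing all four points on line $PP'$. The main obstacle, and the reason (c) needs separate attention, is the parenthetical ``whether $P$ lies on a median or not'': in that case $\cC_P$ degenerates and the preceding argument fails. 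I would dispose of this by continuity, since the coordinates of $T_P(P')$ and $T_{P'}(P)$ are rational functions of the coordinates of $P$ and the collinearity of four points is a closed condition, so the identity established on the dense complement of the medians extends to the medians themselves; alternatively, when $P$ lies on median $AG$ one can verify directly from the definitions of $T_P$ and $T_{P'}$ on the vertex $A$ that $P', Q, Q', T_P(P'), T_{P'}(P)$ all lie on $AG$.
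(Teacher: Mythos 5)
Your proposal is correct, and for parts (a) and (b) and the generic case of (c) it follows essentially the same route as the paper: (a) is read off from the Steiner construction of $\cC_P$ as the locus $x\cd T_P(x)$, (b) is the two-way translation between ``$Y\in\cC_P$'' and ``$T_P(Q'Y)=PY$,'' and (c) comes from applying (b) to $Y=P'$ and its $P\leftrightarrow P'$ mirror (the identification $\cC_{P'}=\cC_P$ you use is already established inside the proof of Theorem~\ref{thm:3.1}). The one place you genuinely diverge is the degenerate sub-case of (c) where $P$ lies on a median $AG$. The paper handles this synthetically: it notes that $P,P',Q,Q'$ all lie on $AG$, invokes the Collinearity Theorem (I, Theorem 3.5) to collapse the points $D_i$ to a single point so that $A_0=A_2=A_3$ is the midpoint $EF\cd AG$ of $EF$, and then reads off that $T_P(P')$ lies on $DA_3=AG$ (and symmetrically for $T_{P'}(P)$). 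Your first alternative --- continuity/specialization, using that $T_P(P')$ and $T_{P'}(P)$ are rational in the coordinates of $P$ and that collinearity is a closed condition --- is mathematically sound (the relevant denominators vanish only on the loci already excluded by hypothesis), and it is shorter; its cost is that it steps outside the synthetic framework the paper is deliberately building, and it requires a one-line check that the rational parametrization has no poles on the median. Your second alternative is a sketch of the paper's own argument, but as written it omits the actual mechanism (the collapse $A_0=A_2=A_3$ via the Collinearity Theorem), so if you want the synthetic version you should supply that step rather than assert that one ``can verify directly.''
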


\begin{proof}
Part (a) follows from the definition of the conic $\cC_P$ as the locus of intersections $x\cd y$. For part (b), if $Y$ lies on $\cC_P$, then by part (a), $P, Y$, and $T_P(Y)$ are collinear. This also clearly holds for $Y=P$ and $Y=Q'$.  Conversely, suppose $P, Y$, and $T_P(Y)$ are collinear. If $Y \notin \{P,Q'\}$, then $T_P$ maps the line $x = Q'Y$ to the line $y = PT_P(Y) = PY$, so $x\cd y = Y$ lies on $\cC_P$. Part (c) follows from part (b) with $Y = P'$ and from the analogous statement obtained by switching $P$ and $P'$, as long as $P$ does not lie on a median of $ABC$. Now assume that $P\ne G$ lies on the median $AG$. Then $P, P', Q$, and $Q'$ are all on $AG$. From the Collinearity Theorem (I, Theorem 3.5) the points $D_i$ for $0 \le i \le 4$ are all the same point, so $A_0 = A_2 = A_3$, the midpoint $EF\cd AG$ of $EF$; and similarly, the points $A'_i$ equal $A'_0 = A'_3$, the midpoint $E_3F_3\cd AG$ of $E_3F_3$. From $P'$ on $AD_3$ it follows that $T_P(P')$ is on $DA_3=D_0A_2 = AG$. Similarly, $T_{P'}(P)$ lies on $AG = PP'$. 
\end{proof}

\begin{figure}
\[\includegraphics[width=4.5in]{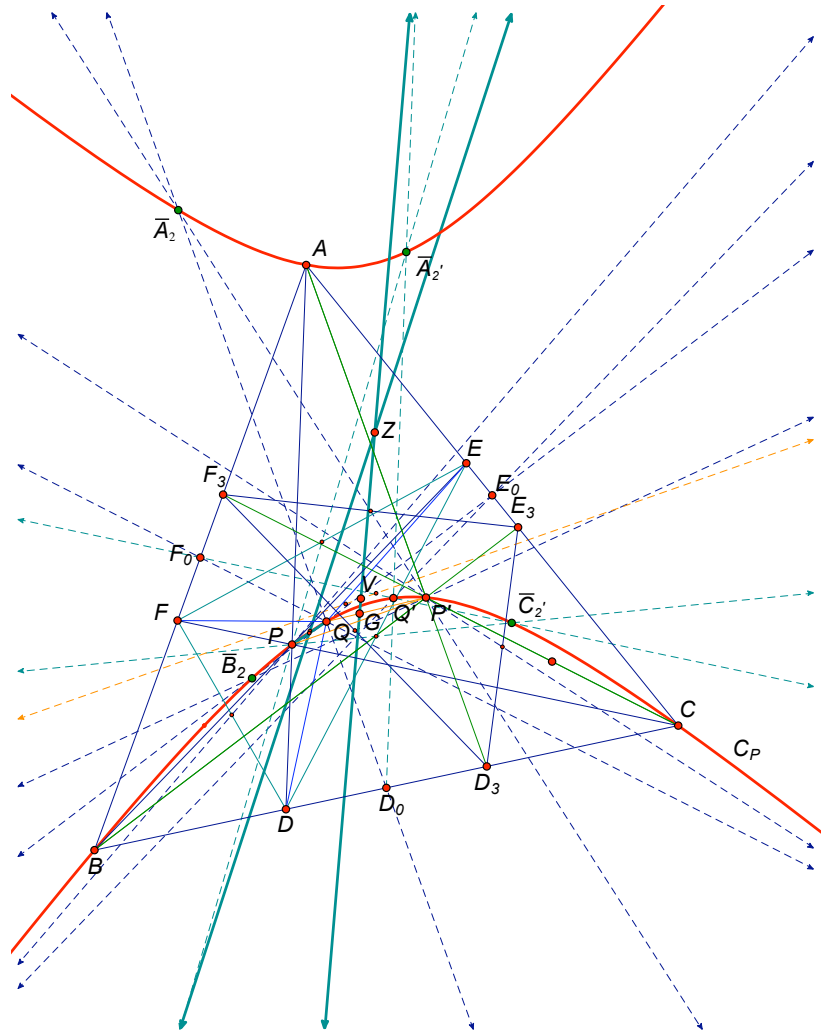}\]
\caption{Cevian conic, with points $\bar A_2=A_0'P' \cdot D_0Q, \bar A_2'=A_0P \cdot D_0Q'$, etc.}
\label{fig:3.0}
\end{figure}
\bigskip

\noindent {\bf Remarks}. 1. We know from I, Theorems 3.13 and 2.4 that the lines $D_0Q'$, $E_0Q'$, $F_0Q'$ in (1) are the cevians for the point $Q'$ with respect to the anticevian triangle $A'B'C'$ of $Q$, since $A'Q', B'Q'$, and $C'Q'$ pass through the midpoints of the sides of $ABC$. See Theorem \ref{thm:3.9} below.\\
2. Part (b) of the corollary allows for an easy computation of an equation for the conic $\cC_P$. \medskip

We will assume the hypothesis of Theorem \ref{thm:3.1} anytime we make use of the conic $\cC_P = ABCPQ$. Alternatively, we could define $\cC_P$ to be a degenerate conic, the union of a median and a side of $ABC$, when $P$ does lie on a median. In that case, $P, Q, P', Q'$ are all on the median.\medskip

Assume now that the point $P$ is ordinary and does not lie on a median of triangle $ABC$ or on $\iota(\li)$. Then the points $P'$ and $Q$ are also ordinary. We shall use the conic $\cC_P$ to prove an interesting relationship between the maps $T_P$ and $T_{P'}$. First note that the lines $PP'$ and $QQ'$ are parallel, since $K(PGP') = Q'GQ$. The midpoint of $QQ'$ is clearly the complement of the midpoint of $PP'$, so the line joining them passes through the centroid $G$. Since the quadrangle $PP'QQ'$ is inscribed in $\cC_P$, its diagonal triangle is a self-polar triangle for $\cC_P$ [cox2, p. 75]. The vertices of this self-polar triangle are $PQ'\cd P'Q = G, PQ\cd P'Q' = V$, and $PP'\cd QQ' = V_\infty$, a point on the line at infinity. Hence, the polar of $G$ is the line $VV_\infty$, which is the line through $V$ parallel to $PP'$. The polar of $V_\infty$ is $GV$. Since the segment $QQ'$ is parallel to $PP'$ and half its length, $Q$ is the midpoint of segment $PV$, so $V$ is the reflection of $P$ in $Q$, and the reflection of $P'$ in $Q'$.  Considering the quadrangle $VQGQ'$, it is not hard to see that $GV$ passes through the midpoints of segments $PP'$ and $QQ'$, since these midpoints are harmonic conjugates of $V_\infty$ with respect to the point pairs $(QQ')$ and $(PP')$.  \medskip

Let $Z = Z_P$ be the center of the conic $\cC_P = ABCPQ$, so $Z$ is the pole of the line at infinity. Since $V_\infty$ lies on the polar of $Z$, $Z$ must lie on $GV$. Since $PP'$ and $QQ'$ are parallel chords on the conic $\cC_P$, the line through their midpoints passes through $Z$ [cox1, p. 111]. Hence we have:

\begin{prop}\label{prop:3.2} Assume that the ordinary point $P$ does not lie on a median of triangle $ABC$ or on $\iota(\li)$.
\begin{enumerate}[(a)]
\item The points $G, V = PQ\cd P'Q'$, and $V_\infty = PP'\cd QQ'$ form a self-polar triangle with respect to the conic $\cC_P = ABCPQ$.
\item The center $Z$ of the conic $\cC_P$ lies on the line $GV = G(PQ\cd P'Q')$, which is the polar of the point $V_\infty$.
\item The line $GV = GZ$ passes through the midpoints of the parallel chords $PP'$ and $QQ'$.
\item The harmonic homology $\mu_G$ with center $G$ and axis $VV_\infty$ maps the conic $\cC_P$ to itself. In other words, if a line $GY$ intersects the conic in points $X_1$ and $X_2$ and $GY\cd VV_\infty = X_3$, then the cross-ratio $(X_1X_2, GX_3) = -1$.
\item The line $VV_\infty$ is the same as the line joining the anti-complements of $P$ and $P'$. Thus, the polar of $G$ is the line $K^{-1}(PP')$.
\item The point $V$ is the midpoint of the segment joining $K^{-1}(P)$ and $K^{-1}(P')$.
\end{enumerate}
\end{prop}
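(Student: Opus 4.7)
The plan is to consolidate the observations from the paragraph preceding the proposition into proofs of (a)--(c), handle (d) by the standard projective fact that a harmonic homology about a pole--polar pair preserves the conic, and derive (e) and (f) by a short calculation with the complement map.

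For (a), I would invoke Coxeter's theorem \cite[p.~75]{cox2} that the diagonal triangle of a quadrangle inscribed in a conic is self-polar with respect to that conic. Applied to $PP'QQ' \subset \cC_P$, one must identify the three diagonal points. Since $Q' = K(P)$ and $Q = K(P')$ and $K$ is a dilation with center $G$, the lines $PQ'$ and $P'Q$ each pass through $G$. A short barycentric check ($P = (x:y:z)$, $P' = (yz:xz:xy)$, $G=(1:1:1)$ collinear iff $(x-y)(y-z)(z-x)=0$) shows these lines are distinct precisely when $P$ is not on a median of $ABC$, so under the proposition's hypothesis $PQ' \cd P'Q = G$. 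By definition $PQ \cd P'Q' = V$, and since $K$ has ratio $-\tfrac12$, $QQ' \parallel PP'$, whence $PP' \cd QQ' = \Vi$. Part (b) is now immediate from (a) and la Hire's theorem: $Z$ is the pole of $\li$, which contains $\Vi$, so $Z$ lies on the polar of $\Vi$, namely $GV$. For (c), since $\Vi$ is the pole of $GV$, the line $GV$ meets each chord through $\Vi$ at the harmonic conjugate of $\Vi$ with respect to the two points where that chord meets $\cC_P$; as $\Vi$ is at infinity, this harmonic conjugate is the midpoint. Applying this to the chords $PP'$ and $QQ'$ yields (c).

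For (d), I would quote (or derive in two lines) the general fact that for any pole--polar pair $(X,\ell)$ relative to a conic, the harmonic homology with center $X$ and axis $\ell$ preserves the conic. Given $X_1 \in \cC_P$, let $X_2$ be the second intersection of $GX_1$ with $\cC_P$ and $X_3 = GX_1 \cd \ell$. Since $X_3$ lies on the polar $V\Vi$ of $G$, the points $G$ and $X_3$ are conjugate with respect to $\cC_P$, which is exactly the condition $(X_1X_2, GX_3) = -1$; hence $\mu_G(X_1) = X_2 \in \cC_P$. The concrete cross-ratio statement in (d) is just a restatement of this.

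For (e) and (f), place $G$ at the origin, so that $K$ is the scaling $X \mapsto -X/2$ and the anticomplement is $K^{-1}\colon X \mapsto -2X$. Then $Q' = -P/2$ and $Q = -P'/2$. As recorded in the preceding paragraph (from $QQ' \parallel PP'$ with $|QQ'| = \tfrac12 |PP'|$), $V$ is the reflection of $P$ in $Q$, so $V = 2Q - P = -(P+P')$. The midpoint of the anticomplements $K^{-1}(P) = -2P$ and $K^{-1}(P') = -2P'$ is $-(P+P') = V$, proving (f). Moreover $K^{-1}(P) - K^{-1}(P') = -2(P-P')$ is parallel to $P - P'$, so the line through the two anticomplements is the unique line through $V$ parallel to $PP'$, which is $V\Vi$; this proves (e), and the reformulation ``polar of $G$ is $K^{-1}(PP')$'' follows since $K^{-1}$ sends the line $PP'$ to the line through $K^{-1}(P)$ and $K^{-1}(P')$. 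The only real obstacle is bookkeeping: pinning down the distinctness of $PQ'$ and $P'Q$ in (a) and recording that $V$ is the reflection of $P$ in $Q$; both are handled by the hypothesis and the paragraph immediately preceding the proposition.
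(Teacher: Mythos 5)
Your proposal is correct and, for parts (a)--(d), follows the paper's own argument essentially verbatim: the self-polar diagonal triangle of the inscribed quadrangle $PP'QQ'$ (with the same identification of the three diagonal points $G$, $V$, $V_\infty$), la Hire's theorem for (b), the harmonic conjugate of the infinite point $V_\infty$ being the midpoint of a chord for (c), and the standard pole--polar homology fact for (d). For (e) and (f) the paper argues synthetically --- it applies the homology $\mu_G$ of part (d) to get $(PQ', GX_3) = -1$, whence $Q' = K(P)$ is the midpoint of $PX_3$ and $X_3 = K^{-1}(P)$ lies on $VV_\infty$, and it derives (f) from the $K$-invariance of the line $GV$ --- whereas you substitute a direct vector computation with $G$ at the origin; both routes are valid and of comparable length, yours being more mechanical while the paper's makes (e) a consequence of (d).
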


\begin{proof}
We have already proven parts (a)-(c). Part (d) is immediate from the fact that a conic is mapped into itself by any homology whose center is the pole of its axis [cox2, p. 76, ex. 4]. For part (e), note that $\mu_G(P) = Q'$, since $PQ'$ is a chord of the conic containing $G$. Hence $(PQ', GX_3) = -1$, where $GP\cd VV_\infty = X_3$; this implies $PX_3 = -2X_3Q'$, which means that $Q'$ is the midpoint of $PX_3$. But $Q' = K(P)$, so $K(X_3) = P$. Thus $X_3$ is the anti-complement of $P$. Similarly, the anti-complement of $P'$ lies on $VV_\infty$, and this proves part (e). Part (f) follows from the fact that the midpoint $M$ of $PP'$ lies on the line $GV$, so that
\[K^{-1}(M) = K^{-1}(PP')\cd K^{-1}(GV) = K^{-1}(PP')\cd GV = V,\]
since the line $GV$ is an invariant line of the complement map.
\end{proof}

Let $\eta = \eta_P$ be the harmonic homology whose center is $V_\infty$ and whose axis is its polar $GV$. The map $\eta$ is an affine reflection [cox3, p. 203], since it fixes the line $GV$ and maps a point $Y$ to the point $Y'$ with the property that $YY'$ is parallel to $VV_\infty$ (or $PP'$) and $YY'\cd GV$ is the midpoint of $YY'$. The map $\eta$ is an involution on the extended plane. It takes the conic $\cC_P$ to itself and interchanges the point pairs $(PP')$ and $(QQ')$, since the line $GV$ passes through the midpoints of the chords $PP'$ and $QQ'$ and both lines $PP'$ and $QQ'$ lie on $V_\infty$. Hence this homology induces an involution of points on $\cC_P$.\\
\\
{\bf Remark.} It is not hard to show that the map $\eta$ commutes with the complement map: $K\eta = \eta K$.\medskip

We shall now prove

\begin{thm}\label{thm:3.3} Assume that the ordinary point $P$ does not lie on a median of triangle $ABC$ or on $\iota(\li)$. Then the maps $T_P$ and $T_{P'}$ satisfy the equation $\eta T_P = T_{P'} \eta$, and so are conjugate to each other in the affine group.
\end{thm}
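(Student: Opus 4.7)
The plan is to observe that both $\eta T_P$ and $T_{P'}\eta$ are affine maps of the extended plane, and therefore coincide if and only if they agree at three non-collinear points. I would check the equation at $G$, at $Q'$, and at one further point, the third agreement being the main obstacle.

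At $G$: since $G$ lies on the axis $GV$ of the affine reflection $\eta$, we have $\eta(G)=G$, so $T_{P'}\eta(G)=T_{P'}(G)=G_2$, the centroid of the cevian triangle $D_3E_3F_3$ of $P'$. On the other side, $T_P(G)=G_1$ is the centroid of $DEF$, so $\eta T_P(G)=\eta(G_1)$. The crucial input is the forthcoming Lemma 2.5, which asserts precisely that $\eta(G_1)=G_2$ (together with the related statement that $G$ is the midpoint of $G_1G_2$). This identity is the synthetic heart of the proof, and I would concentrate my effort there, since once it is in hand the remainder of the argument is essentially formal.

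At $Q'$: the harmonic homology $\eta$ interchanges $P\leftrightarrow P'$ and $Q\leftrightarrow Q'$, because both chords $PP'$ and $QQ'$ pass through the center $V_\infty$ and are bisected by the axis $GV$ (Proposition \ref{prop:3.2}(c)). Using $T_P(Q')=P$ from the proof of Theorem \ref{thm:3.1}, together with its $P'$-analog $T_{P'}(Q)=P'$, one obtains $\eta T_P(Q')=\eta(P)=P'$ and $T_{P'}\eta(Q')=T_{P'}(Q)=P'$, so the two maps agree at $Q'$.

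For the third non-collinear point, I would introduce the discrepancy affine map $\psi=T_P^{-1}\eta T_{P'}\eta$. By the previous two steps, $\psi(G)=G$ and $\psi(Q')=Q'$, so $\psi$ fixes the entire line $GQ'$ pointwise; such a map lies in a two-parameter family of perspective affinities with axis $GQ'$, and is pinned down by its value at any single further point. A natural candidate is the infinite point $V_\infty$, which is fixed by $\eta$; the required identity $\psi(V_\infty)=V_\infty$ then reduces to $T_{P'}(V_\infty)=\eta(T_P(V_\infty))$, a statement about how $T_P$ and $T_{P'}$ transform the common direction of the parallel chords $PP'$ and $QQ'$. I expect to derive this directional identity from Lemma 2.5 as well, exploiting the fact that $G_1G_2$ is parallel to $PP'$. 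The final assertion that $T_P$ and $T_{P'}$ are conjugate in the affine group is then an immediate rewriting of $T_{P'}=\eta T_P\eta^{-1}$.
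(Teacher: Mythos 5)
Your overall strategy is the paper's: both proofs verify $\eta T_P=T_{P'}\eta$ at enough points to force equality of two affine maps, both use the forthcoming Lemma~\ref{lem:3.4} (that $\eta(G_1)=G_2$) to handle the point $G$, and both use the interchanges $\eta\colon P\leftrightarrow P'$, $Q\leftrightarrow Q'$ together with $T_P(Q')=P$, $T_{P'}(Q)=P'$ to handle the point $Q'$. Those two steps are fine.

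The gap is in your third step. After establishing that $\psi=T_P^{-1}\eta T_{P'}\eta$ fixes the line $GQ'$ pointwise, you propose to finish by showing $\psi(V_\infty)=V_\infty$. That is not enough. The affinities fixing a line pointwise form a two-parameter family, but the value of $\psi$ at a point \emph{at infinity} is itself a point at infinity, i.e.\ a direction, and carries only one parameter of information; so $\psi(V_\infty)=V_\infty$ cuts the family down to a one-parameter family of strains, not to the identity. Concretely, in coordinates where $GQ'$ is the $x$-axis and $V_\infty$ is the vertical direction, every map $(x,y)\mapsto(x,ey)$ fixes the axis pointwise and fixes $V_\infty$, yet only $e=1$ is the identity. (The directional identity $T_{P'}(V_\infty)=\eta(T_P(V_\infty))$ you hope to prove is in fact true and easy --- both sides are the infinite point of $P'Q'$ --- but it cannot close the argument.) "Pinned down by its value at any single further point" is correct only for an \emph{ordinary} point off the axis.

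The repair is immediate, and it is what the paper does: take the third point to be $Q$, which is ordinary and not on $GQ'=PGQ'$ since $P$, $Q$, $Q'$ lie on the conic $\cC_P$ and hence are not collinear. Then
\[
\eta T_P(Q)=\eta(Q)=Q'=T_{P'}(Q')=T_{P'}\eta(Q),
\]
using the fixed points $T_P(Q)=Q$ and $T_{P'}(Q')=Q'$ from Part I. With agreement at the three non-collinear ordinary points $G$, $Q$, $Q'$, the two affine maps coincide, and $T_{P'}=\eta T_P\eta^{-1}$ gives the conjugacy. This also renders the whole $V_\infty$ computation unnecessary.
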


To prove this theorem we need a lemma, which is of interest in its own right.

\begin{lem}\label{lem:3.4}
Let $G_1 = T_P(G)$ and $G_2 = T_{P'}(G)$ be the centroids of the cevian triangles $DEF$ and $D_3E_3F_3$ of $P$ and $P'$. Then the centroid $G$ of $ABC$ is the midpoint of the segment $G_1G_2$, which is parallel to $PP'$. In other words, $\eta(G_1) = G_2$ (when $P$ and $P'$ are ordinary).
\end{lem}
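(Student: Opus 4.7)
The lemma has two contents: (i) $G$ is the midpoint of $G_1G_2$, and (ii) the segment $G_1G_2$ is parallel to $PP'$. Together these give $\eta(G_1) = G_2$, because $\eta$ is the affine reflection across $GV$ in the direction of $V_\infty$, so it fixes $G$ and acts on any line parallel to $PP'$ by reflection through its intersection with $GV$. My plan is to establish (i) by a short symmetry argument and (ii) by a direct computation in barycentric coordinates.

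\textbf{The midpoint relation.} Since $P' = \iota(P)$, the cevian foot $D_3$ of $P'$ on $BC$ is the isotomic reflection of $D$ through the midpoint $A_0$ of $BC$; as position vectors, $D + D_3 = 2A_0$. Cyclically, $E + E_3 = 2B_0$ and $F + F_3 = 2C_0$. Summing these three equations and dividing by $3$ gives
\[
G_1 + G_2 = \tfrac{1}{3}(D+E+F) + \tfrac{1}{3}(D_3+E_3+F_3) = \tfrac{2}{3}(A_0 + B_0 + C_0) = 2G,
\]
since the medial triangle $A_0B_0C_0$ has the same centroid as $ABC$. Hence $G$ is the midpoint of $G_1G_2$.

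\textbf{The parallelism.} For the direction of $G_1G_2$, I would fix barycentric coordinates $P = (x:y:z)$ with $x + y + z = 1$, so that $P' = (yz:xz:xy)/s$ with $s = xy + yz + zx$. Then $D = (yB+zC)/(y+z)$ and $D_3 = (zB+yC)/(y+z)$, giving $\vec{DD_3} = \tfrac{y-z}{y+z}\vec{BC}$, with cyclic expressions for $\vec{EE_3}$ and $\vec{FF_3}$. Expanding $3(G_2 - G_1) = \vec{DD_3} + \vec{EE_3} + \vec{FF_3}$ in the basis $\{A,B,C\}$, the coefficient of $A$ simplifies to $-2(x^2 - yz)/[(x+y)(x+z)]$, and cyclically for $B$ and $C$. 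On the other hand, using the identity $sx - yz = (y+z)(x^2 - yz)$ (which follows from $x+y+z=1$),
\[
P - P' = \frac{1}{s}\bigl[(y+z)(x^2-yz)A + (x+z)(y^2-xz)B + (x+y)(z^2-xy)C\bigr].
\]
All three coordinate ratios collapse to the same constant $-2s/[(x+y)(y+z)(z+x)]$, so $G_2 - G_1 = \dfrac{2s}{3(x+y)(y+z)(z+x)}\,(P' - P)$. In particular $G_1G_2 \parallel PP'$.

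\textbf{Combining and the main obstacle.} With $G$ the midpoint of $G_1G_2$ and $G_1G_2 \parallel PP'$, the affine reflection $\eta$ sends $G_1$ to $G_2$. The main obstacle in this plan is the coordinate computation in the parallelism step: the algebra is routine, but recognizing that a single scalar works for all three coordinates depends on the factorization $sx - yz = (y+z)(x^2-yz)$ and its cyclic analogues. I expect a cleaner synthetic argument to exist — one might try to exploit the self-polar triangle $\{G, V, V_\infty\}$ of $\cC_P$ together with the Collinearity Theorem from Part I — but I do not immediately see a route that avoids the barycentric calculation.
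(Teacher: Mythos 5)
Your proof is correct, and both halves take a genuinely different route from the paper's. For the midpoint claim the paper runs a chain of dilatations through the intermediate triangles $D_0EF$, $D_0E_0F$, $D_0E_0F_0$ and $D_3EF$, $D_3E_3F$, $D_3E_3F_3$, showing at each stage that the centroid's displacement toward $G$ is half its displacement toward $G_2$; your one-line identity $G_1+G_2=\tfrac13\bigl((D+D_3)+(E+E_3)+(F+F_3)\bigr)=2G$ rests on exactly the same fact (each side's midpoint bisects the segment joining the two isotomic cevian feet) but packages it far more efficiently. (Watch the notation, though: in this series the midpoints of $BC, CA, AB$ are called $D_0, E_0, F_0$, while $A_0$ denotes the midpoint of $EF$.) For the parallelism the paper stays synthetic: by I, Corollary 3.3 the points $Q, G_1, T_P(P')$ are collinear with $G_1$ one-third of the way along, while $G$ is one-third of the way along $QP'$, so triangles $G_1QG$ and $T_P(P')QP'$ are similar and $GG_1 \parallel T_P(P')P' = PP'$ by Corollary 2.2(c); swapping $P$ and $P'$ gives $GG_2 \parallel PP'$, whence $G_1, G, G_2$ are collinear. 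Your barycentric computation (which I have checked, including the key factorization $sx-yz=(y+z)(x^2-yz)$) buys independence from the Part I machinery at the cost of some algebra and of the synthetic spirit of the series; the paper's argument is coordinate-free and also covers the degenerate case where $P'=Q$ is infinite (there your normalization $s\ne 0$ fails), which it needs later in the proof of Theorem 2.6. Since the displayed conclusion $\eta(G_1)=G_2$ is only asserted for $P$ and $P'$ ordinary, that omission does not invalidate your proof of the lemma as stated, and your final deduction of $\eta(G_1)=G_2$ from the midpoint and parallelism facts is the same as the paper's.
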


\begin{proof}
We first show that the line $G_1G_2$ lies on $G$ and is parallel to $PP'$. To begin with, assume $P$ and $P'$ are ordinary. Using I, Corollary 3.3, we know that the points $Q, G_1, T_P(P')$ are collinear, with $G_1$ one-third of the way from $Q$ to $T_P(P')$. Since $G$ is collinear with $Q$ and $P'$ and one-third of the way along $QP'$, the triangles $G_1QG$ and $T_P(P')QP'$ are similar (SAS). Hence the line $G_1G$ is parallel to the line $T_P(P')P' = PP'$ (Corollary \ref{cor:3.1}). Switching the roles of $P$ and $P'$ gives that $G_2G$ is also parallel to $PP'$, which implies that $G_1G = G_2G$, proving the claim.\\

If $P' = Q$ is infinite, then $P$ and $Q'$ are ordinary, so we still get $G_2G \pa PP'$. Applying the map $T_P$ and using I, Theorem 3.14 gives that $T_P(G_2G) \pa T_P(PP')$, i.e., $GG_1 \pa PP'$, because $T_P(G_2)=T_P T_{P'}(G)=K^{-1}(G)=G$ and $T_P(PP') = T_P(PQ) = T_P(P)Q \pa PQ = PP'$. Hence we get $G_1G = G_2G$, as before.  A similar argument works if $P = Q'$ is infinite.\\

We now show that $G$ is the midpoint of $G_1G_2$. (Cf. Figure 3.)  Consider the sequence of triangles $DEF$, $D_0EF, D_3EF$ with centroids $G_1, G_{01}, G_{11}$ lying on the respective lines $A_0D, A_0D_0, A_0D_3$ (since $A_0$ is the midpoint of $EF$). By the properties of the centroid we know that the segment $G_1G_{11}$ is the image of the segment $DD_3$ under a dilatation with center $A_0$ and ratio 1/3. Since $D_0$ is the midpoint of $DD_3$ it follows that the vector $\overrightarrow{G_1G_{01}}$ is $\frac12$ the vector $\overrightarrow{G_1G_{11}}$.

\begin{figure}
\[\includegraphics[width=4.5in]{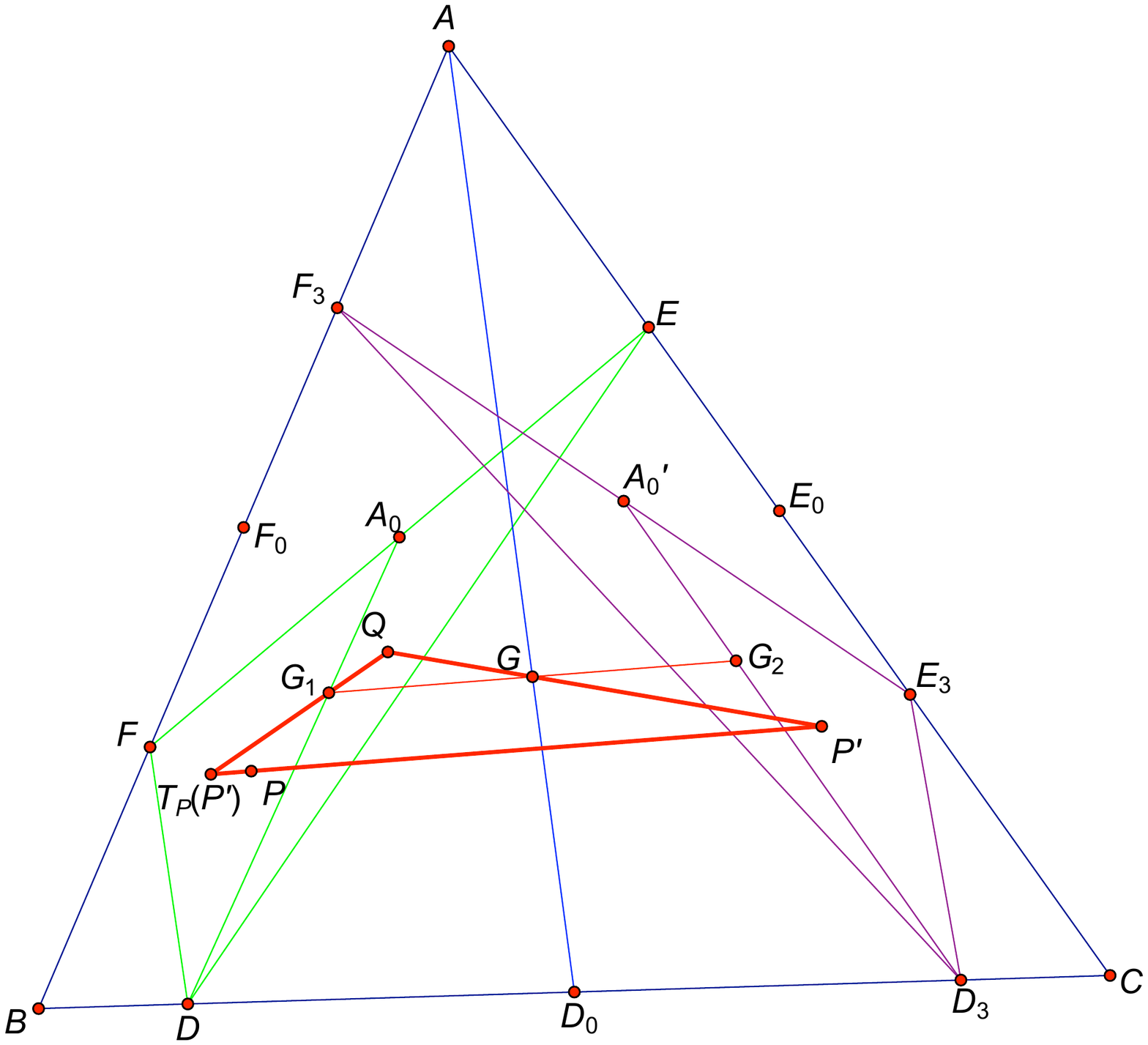}\]
\caption{$G_1G_2 || PP'$}
\label{fig:3.1}
\end{figure}
\bigskip

Next consider passing from triangle $D_3EF$ with centroid $G_{11}$ to the triangle $D_3E_3F$ with centroid $G_{12}$. Considering the dilatation with ratio $1/3$ from the midpoint of $D_3F$ shows that the vector $\overrightarrow{G_{11}G_{12}}$ is $\frac13$ the vector $\overrightarrow{EE_3}$. In the same way, if we move from triangle $D_0EF$ with centroid $G_{01}$ to triangle $D_0E_0F$ with centroid $G_{02}$, the vector $\overrightarrow{G_{01}G_{02}}$ is $\frac13$ the vector $\overrightarrow{EE_0}$, and therefore $\frac12$ the vector $\overrightarrow{G_{11}G_{12}}$.\\

Finally, in passing from triangle $D_3E_3F$ with centroid $G_{12}$ to triangle $D_3E_3F_3$ with centroid $G_2$, the vector $\overrightarrow{G_{12}G_2}$ is $\frac13$ the vector $\overrightarrow{FF_3}$. Similarly, in passing from $D_0E_0F$ with centroid $G_{02}$ to triangle $D_0E_0F_0$ with centroid $G$, the vector $\overrightarrow{G_{02}G}$ is $\frac13$ of $\overrightarrow{FF_0}$. Hence, the vector $\overrightarrow{G_{02}G}$ is $\frac12$ the vector $\overrightarrow{G_{12}G_2}$.\\

It follows that in passing from triangle $DEF$ to $D_0E_0F_0$, the centroid experiences a displacement represented by the vector
\[\overrightarrow{G_1G_{01}} + \overrightarrow{G_{01}G_{02}} + \overrightarrow{G_{02}G} = \frac12(\overrightarrow{G_1G_{11}} + \overrightarrow{G_{11}G_{12}} + \overrightarrow{G_{12}G_2}),\]
which is $\frac12$ the vector displacement from $G_1$ to $G_2$. Hence, $G_1G = \frac12 G_1G_2$, which proves that $G$ is the midpoint of $G_1G_2$.
\end{proof}

\begin{figure}
\[\includegraphics[width=4.5in]{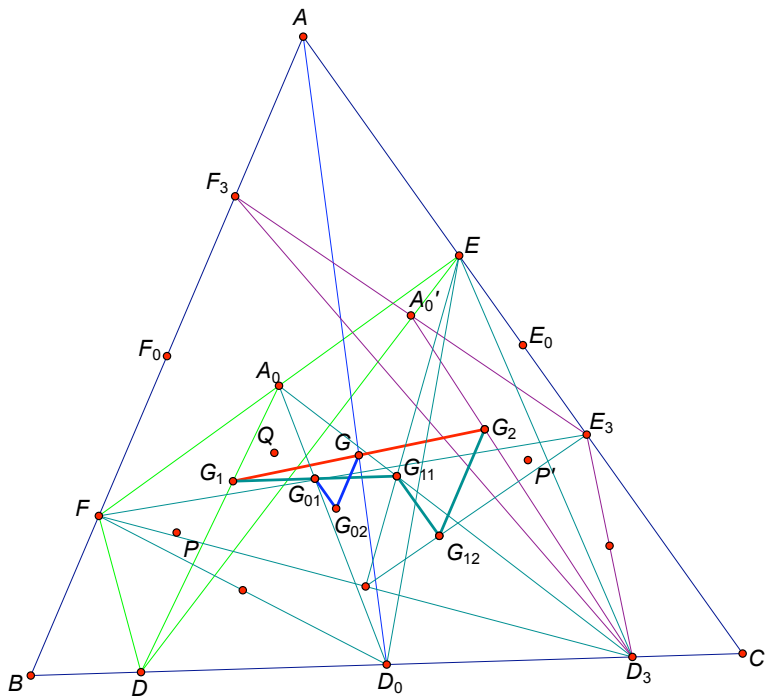}\]
\caption{$G$ is the midpoint of $G_1G_2$}
\label{fig:3.2}
\end{figure}

\begin{proof}[{Proof of Theorem 2.4}]
We check that $\eta T_P(Y) = T_{P'}\eta(Y)$ for three non-collinear ordinary points $Y$. This holds for $Y = G$ by the lemma. It also holds for the points $Q$ and $Q'$, since I, Theorems 3.2 and 3.7 imply that
\[\eta T_P(Q) = \eta(Q) = Q' = T_{P'}(Q') = T_{P'}\eta(Q)\]
and
\[\eta T_P(Q') = \eta(P) = P' = T_{P'}(Q) = T_{P'}\eta(Q').\]
The points $Q, Q'$, and $G$ are clearly not collinear, since $G$ is the intersection of the diagonals in the trapezoid $PP'Q'Q$ (and no three of these points lie on a line because they all lie on the conic $\mathcal{C}_P$). This implies the theorem, since $\eta T_P$ and $T_{P'}\eta$ are affine maps.
\end{proof}

\noindent {\bf Remark.} The map $\eta$ is the unique involution $\psi$ in the affine group satisfying $\psi T_P = T_{P'}\psi$.\\

For the corollary, recall that the point $X=AA_3 \cd BB_3$ is the fixed point of the map $\mathcal{S}_1=T_P T_{P'}$, and $X'=AA_3' \cdot BB_3'$ is the fixed point of $\mathcal{S}_2=T_{P'}T_P$.

\begin{cor}\label{cor:3.3} Assume that the ordinary point $P$ does not lie on a median of triangle $ABC$ or on $\iota(\li)$. If $X$ is an ordinary point, then $\eta(X) = X'$ and $XX'$ is parallel to $PP'$. Thus the line joining the $P$-ceva conjugate of $Q$ and the $P'$-ceva conjugate of $Q'$ is parallel to $PP'$.
\end{cor}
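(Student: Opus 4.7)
The plan is to reduce the statement to Theorem 2.4 by a conjugation argument. From $\eta T_P = T_{P'}\eta$ and the fact that $\eta$ is an involution, I would rewrite the equation as $\eta T_P \eta^{-1} = T_{P'}$, and, applying $\eta$ again, also $\eta T_{P'}\eta^{-1} = T_P$. Multiplying these,
\[
\eta\,\mathcal{S}_1\,\eta^{-1} \;=\; \eta\,T_P T_{P'}\,\eta^{-1} \;=\; (\eta T_P \eta^{-1})(\eta T_{P'} \eta^{-1}) \;=\; T_{P'}T_P \;=\; \mathcal{S}_2,
\]
so $\mathcal{S}_1$ and $\mathcal{S}_2$ are $\eta$-conjugate.

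Next, since $X$ is a fixed point of $\mathcal{S}_1$, applying the above identity gives
\[
\mathcal{S}_2(\eta(X)) \;=\; \eta\,\mathcal{S}_1(X) \;=\; \eta(X),
\]
so $\eta(X)$ is a fixed point of $\mathcal{S}_2$. Provided $X$ is ordinary, $\eta(X)$ is also ordinary, and $\mathcal{S}_2$ has at most one ordinary fixed point (the stated one $X' = AA'_3 \cdot BB'_3$), so $\eta(X) = X'$. The parallelism $XX' \parallel PP'$ then falls out immediately from the definition of $\eta$: it is the harmonic homology with center $V_\infty = PP' \cdot QQ'$, so for any point $Y$ the segment $Y\,\eta(Y)$ passes through $V_\infty$, i.e., is parallel to $PP'$.

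For the last sentence I would simply cite the identification of $X$ and $X'$ as ceva conjugates from Part I: $X$ is the $P$-ceva conjugate of $Q$ and $X'$ is the $P'$-ceva conjugate of $Q'$, so the line $XX'$ is precisely the line joining these two ceva conjugates, which we have just shown is parallel to $PP'$.

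The only real point of caution is the uniqueness of the ordinary fixed point of $\mathcal{S}_2$; if $\mathcal{S}_2$ happened to be the identity or a translation this step would need amending. But $X'$ is already defined as a specific intersection point in the hypotheses, and Part I guarantees it is the fixed point of $\mathcal{S}_2$, so the identification $\eta(X) = X'$ goes through without further work.
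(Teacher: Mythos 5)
Your proposal is correct and follows essentially the same route as the paper: both show $\eta(X)$ is a fixed point of $\mathcal{S}_2=T_{P'}T_P$ via the relation $\eta T_P=T_{P'}\eta$ (you package the computation as the conjugation $\eta\,\mathcal{S}_1\,\eta^{-1}=\mathcal{S}_2$, the paper writes it out directly), then invoke the uniqueness of the ordinary fixed point $X'$ from Part I and read off the parallelism from the definition of $\eta$ as the harmonic homology with center $V_\infty$.
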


\begin{proof} We have $T_{P'}T_P(\eta(X)) = T_{P'} \eta(T_{P'}(X)) = \eta T_PT_{P'}(X) = \eta(X)$, which shows that $\eta(X)$ is an ordinary fixed point of $\mathcal{S}_2=T_{P'}T_P$.  Hence, $\eta(X) = X'$. This implies the assertion, by I, Theorems 3.8 and 3.10.
\end{proof}
\bigskip

\begin{thm}\label{thm:3.5} The commutator $\cS' = T_{P'}T_PT_{P'}^{-1}T_P^{-1}$ is always a translation, in the direction $PP'$ by the distance $T_P(P')P' \cong PT_{P'}(P)$, if $P$ and $P'$ are ordinary; and by the distance $3|G_1G|$, if $P$ or $P'$ is infinite.
\end{thm}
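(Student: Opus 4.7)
The plan is to factor $\mathcal{S}'=\mathcal{S}_2\mathcal{S}_1^{-1}$, where $\mathcal{S}_1=T_PT_{P'}$ and $\mathcal{S}_2=T_{P'}T_P$ are the maps of Corollary \ref{cor:3.3}; by Theorem \ref{thm:3.3} they are $\eta$-conjugate in the ordinary case, with fixed points $X$ and $X'=\eta(X)$ satisfying $XX'\pa PP'$.

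The crucial structural step is to show that $\mathcal{S}_1$ is a homothety centered at $X$. Once this is granted, $\mathcal{S}_2=\eta\mathcal{S}_1\eta$ is automatically a homothety of the same ratio $k$ centered at $X'$, so the linear parts of $\mathcal{S}_1$ and $\mathcal{S}_2$ are both the scalar $kI$. Then $\mathcal{S}'=\mathcal{S}_2\mathcal{S}_1^{-1}$ has identity linear part, and a direct composition gives $\mathcal{S}'(Y)=Y+(1-k)(X'-X)$, identifying $\mathcal{S}'$ as a translation in the direction $PP'$.

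To establish the homothety claim, I would pass to barycentric coordinates with respect to $ABC$ and verify the matrix identity $T_P+T_{P'}=J-I$, where $J$ is the all-ones matrix: the diagonal entries of each $3\times 3$ matrix vanish by the cevian-foot condition, and corresponding off-diagonal entries pair up to sum to $1$ because the cevian feet of $P=(p,q,r)$ and of $P'=(qr,rp,pq)$ on each side of $ABC$ are barycentric coordinate swaps of one another, e.g.\ $D=(0,q,r)$ and $D_3=(0,r,q)$ on $BC$. Restricting to the tangent plane $\{a+b+c=0\}$ of barycentric directions, on which $J-I$ acts as $-I$, this yields $L(T_P)+L(T_{P'})=-I$. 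Combined with $\operatorname{tr}L(T_P)=-1$ (the $3\times 3$ diagonal is zero and $T_P$ has eigenvalue $1$ on its fixed point $Q$), this forces $L(T_{P'})=\operatorname{adj}L(T_P)$, and Cayley--Hamilton then gives $L(T_P)L(T_{P'})=(\det L(T_P))\,I$. Hence $\mathcal{S}_1$ has scalar linear part $kI$ with $k=\det L(T_P)$, and is a homothety at $X$.

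With the translation established, its distance is identified by direct evaluation: using $T_P(Q')=P$ and $T_{P'}(Q')=Q'$ from the proof of Theorem \ref{thm:3.3}, one gets $\mathcal{S}'(P)=T_{P'}(P)$, so the displacement vector is $\overrightarrow{PT_{P'}(P)}$, which lies along $PP'$ by Corollary \ref{cor:3.1}(c). The conjugation $\eta\mathcal{S}'\eta=\mathcal{S}'^{-1}$ (immediate from Theorem \ref{thm:3.3}) yields $\mathcal{S}'^{-1}(P')=T_P(P')$, and comparing the two displacements forces $PT_{P'}(P)\cong T_P(P')P'$. For the case where $P$ or $P'$ is at infinity, $\eta$ is not available, but the same barycentric argument gives $k=-2$, so $\mathcal{S}_1=K^{-1}$ is a $-2$-homothety at $G$ (by Lemma \ref{lem:3.4}'s proof, which shows $X=G$) and $\mathcal{S}_2$ is a $-2$-homothety at a point $X'$ with $X'-G=\overrightarrow{G_1G}$; the translation vector is then $3\overrightarrow{G_1G}$, of magnitude $3|G_1G|$, parallel to $PP'$ by Lemma \ref{lem:3.4}. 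I expect the principal obstacle to be the homothety claim for $\mathcal{S}_1$, which rests on the coordinate identity $T_P+T_{P'}=J-I$ and does not seem to follow directly from the synthetic results already developed.
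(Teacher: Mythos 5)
Your proposal is essentially correct, but the central step is handled by a genuinely different mechanism than the paper's. The paper imports from Part I (Theorem 3.8, Corollary 3.11) the fact that $\cS_1=T_PT_{P'}$ and $\cS_2=T_{P'}T_P$ are homotheties, proves equality of their ratios by the synthetic segment-ratio computation $X'P'/X'Q = XP/XQ'$, concludes only that $\cS'$ is an isometry fixing $\li$ pointwise (hence a translation \emph{or a half-turn}), and then must exclude the half-turn by a three-way case analysis ($P$ off the medians via the midpoints $M_1\ne M_2$ and $\eta$; $P$ on a median via a sidedness argument with $E,E_3$; $P'$ infinite via the explicit identification $\cS'=T_P^{-3}$ on $GG_1$). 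Your barycentric identity $T_P+T_{P'}=J-I$ is correct (the columns are the normalized cevian feet, e.g.\ $D=(0,\tfrac{q}{q+r},\tfrac{r}{q+r})$, $D_3=(0,\tfrac{r}{q+r},\tfrac{q}{q+r})$), and together with $\operatorname{tr}L(T_P)=-1$ it does yield $L(T_{P'})=\operatorname{adj}L(T_P)$ and hence $L(\cS_1)=L(\cS_2)=\det(L(T_P))\,I$. This is strictly stronger than what the paper extracts from Part I: it gives $L(\cS')=I$ at once, so the half-turn possibility (linear part $-I$) never arises and the entire case analysis evaporates. That is the real payoff of your route; the cost is that it is a coordinate computation in a paper whose declared program is synthetic, and, as you note yourself, the identity is not among the tools the paper has developed.

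Two points need patching. First, your formula $\cS'(Y)=Y+(1-k)(X'-X)$ presupposes $k\ne 1$; when $k=1$ the maps $\cS_1,\cS_2$ are translations and $X,X'$ are not ordinary, but the conclusion survives since $L(\cS')=I$ regardless and the displacement is read off from $\cS'(P)=T_{P'}(P)$. Second, and more substantively, the theorem's first case includes $P$ lying on a median of $ABC$ (only ordinariness of $P,P'$ is assumed), and there the conic degenerates, $V$ is undefined, and $\eta$ does not exist --- so your derivations of $XX'\pa PP'$ from Corollary~\ref{cor:3.3} and of the congruence $T_P(P')P'\cong PT_{P'}(P)$ from $\eta\cS'\eta=\cS'^{-1}$ both fail in that case. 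The fix is immediate and $\eta$-free: compute, as the paper does, $\cS'(T_P(P'))=T_{P'}T_PT_{P'}^{-1}(P')=T_{P'}T_P(Q)=P'$ alongside your $\cS'(P)=T_{P'}(P)$; since $\cS'$ is a translation, $\overrightarrow{T_P(P')P'}$ and $\overrightarrow{PT_{P'}(P)}$ are both equal to the translation vector, which gives the congruence outright, and Corollary~\ref{cor:3.1}(c) (which the paper explicitly proves ``whether $P$ lies on a median or not'') puts that vector along $PP'$. With those repairs your argument is complete, and in the infinite case your identification $X=G$, $X'=G_2=2G-G_1$, translation vector $3\overrightarrow{G_1G}$, agrees with the paper's $\cS'=T_P^{-3}$ computation (justify $X'=G_2$ by $\cS_2(G_2)=T_{P'}(T_PT_{P'}(G))=T_{P'}(K^{-1}(G))=T_{P'}(G)=G_2$ rather than by appeal to Lemma~\ref{lem:3.4}).
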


\begin{proof}
For notational convenience write $T_1$ for $T_P$ and $T_2$ for $T_{P'}$.  We first note that
\[\cS'(T_1(P')) = T_2T_1T_2^{-1}(P') = T_2T_1(Q) = T_2(Q) = P'\]
and similarly
\[\cS'(P) = T_2T_1T_2^{-1}(Q') = T_2T_1(Q') = T_2(P).\]
From this computation and Corollary \ref{cor:3.1}, the mapping $\cS'$ fixes the line $PP'$.\smallskip

By I, Theorem 3.8 and Corollary 3.11(c) we may assume that both $\cS_1 = T_1T_2$ and $\cS_2 = T_2T_1$ are homotheties, since otherwise the assertion is trivial. If $P$ and $P'$ are ordinary points, then $\cS_2(Q) = P'$ implies that $X', P'$, and $Q$ are collinear, so part (b) of the same corollary implies that
\[\frac{X'P'}{X'Q} = \frac{T_1(X'P')}{T_1(X'Q)} = \frac{XT_1(P')}{XQ} = \frac{XP}{XQ'};\]
the last equality being a consequence of $\cS_1(Q) = T_1(P')$ and $\cS_1(Q') = P$. This equation shows that the similarity ratios of $\cS_1$ and $\cS_2$ are equal and the mapping $\cS' = \cS_2\cS_1^{-1}$ is an isometry which fixes $l_\infty$ pointwise. It follows that $\cS'$ is either a half-turn or a translation.\smallskip

First assume that $P$ does not lie on a median of $ABC$. Then $\eta(T_1(P')) = T_2(\eta(P')) = T_2(P)$, so that $T_1(P')$ and $T_2(P)$ are both inside or both outside the segment $PP'$, and $T_1(P')P' \cong PT_2(P)$ since $\eta$ preserves lengths along $PP'$. If the midpoints of segments $T_1(P')P'$ and $PT_2(P)$ are $M_1$ and $M_2$, then $\eta(M_1) = M_2$. If $M_1 = M_2$ then $M_1$ is the midpoint of $PP'$, impossible since $T_1(P') \ne P = T_1(Q')$. Thus, $M_1 \ne M_2$, and $\cS'$ fixes the line $PP'$ but is not a half-turn. Hence, $\cS'$ is a translation. This proves the assertion in the case that $P$ is not on a median.\smallskip

Now assume that $P \ne G$ lies on the median $AG$. Then $P, P', Q$, and $Q'$ are distinct points on $AG$. (This follows easily from the fact that $P, Q$ are on the opposite side of $G$ on line $AG$ from $P', Q'$ and $T_P(Q)=Q$ and $T_P(Q')=P$.  See I, Theorems 3.2 and 3.7.)  From I, Theorem 3.5 the points $A_i$ for $0 \le i \le 4$ are all the same point $A_0 = A_3$, the midpoint $EF\cd AG$ of $EF$; and similarly, the points $A'_i$ equal $A'_0 = A'_3$, the midpoint $E_3F_3\cd AG$ of $E_3F_3$. Also, $T_1(P')$ and $T_2(P)$ are on $PP' = AG$, so $T_1, T_2$ and $\cS'$ all fix the line $AG = AD = PP'$.\smallskip

If $P$ lies in the interior of triangle $ABC$, then it is easy to see that $E = T_1(B)$ and $E_3 = T_2(B)$ are on the same side of the line $AD$ as point $C$, and hence that $T_1$ and $T_2$ interchange the sides of this line. Therefore, $B_3 = T_1(E_3)$ and $B'_3 = T_2(E)$ are on the same side of line $AD = AG$, which implies that segments $B_3B'_3$ and $A_3A'_3$ do not intersect. Since
$$\cS'(A_3B_3C_3) = T_2T_1T_2^{-1}(D_3E_3F_3)=T_2T_1(ABC)=A'_3B'_3C'_3,$$
we see that $\cS'$ is not a half-turn, and is therefore a translation. On the other hand, if $P$ is exterior to $ABC$, points $E$ and $E_3$ are on opposite sides of the line $AD$, so one of $T_1$ and $T_2$ interchanges the sides of $AD$ and the other leaves both sides of $AD$ invariant. Hence $B_3 = T_1(E_3)$ and $B'_3 = T_2(E)$ are on the same side of the line $AD$ and we get the same conclusion as before.\smallskip

Finally, assume that $P' = Q$ is infinite. Using I, Theorem 3.14, we have as in the proof of Lemma \ref{lem:3.4} that $T_1(G) = G_1, T_1(G_2) = T_1T_2(G) = K^{-1}(G) = G$ and $T_1(Q) = Q$. Since $G$ is the midpoint of the segment $G_1G_2$, the fundamental theorem of projective geometry implies that $T_1$ acts as translation along the line $GG_1$. In this situation the map $\cS_1 = K^{-1}$ fixes $G$, so $X = G$, while $\cS_2 = T_2T_1$ fixes $G_2 = X'$. (See I, Theorem 3.10 and Corollary 3.11.)  Furthermore, $\cS_2(G) = T_2T_1(G) = T_1^{-1}K^{-1}(G_1) = T_1^{-1}(G_3)$, where $G_3 = K^{-1}(G_1)=T_1^{-1}(G_2)$. Hence, $\cS_2(G) = T_1^{-2}(G_2)$, and the similarity ratio of the homothety $\cS_2$ is $-2$, which is equal to the similarity ratio of $\cS_1 = K^{-1}$. Once again, $\cS' = \cS_2\cS_1^{-1} = \cS_2K$ is an isometry. It is now straightforward to verify that the map $\cS'$, which is the commutator of $T_1^{-1}$ and $K^{-1}$, is equal to the translation $T_1^{-3}$ on the line $GG_1$. For example, $\cS'(G) = \cS_2K(G) = T_1^{-2}(G_2)=T_1^{-3}(G)$, while $\cS'(G_3) = \cS_2(G_1) = T_1^{-1}K^{-1}T_1(G_1) = T_1^{-4}(G_2) = T_1^{-3}(G_3)$. Hence, the points on the line $GG_1$ experience a translation and not a half-turn. If instead, $P = Q'$ is infinite, we apply the same argument to the inverse of $\cS'$. This completes the proof.
\end{proof}

\begin{figure}
\[\includegraphics[width=4.5in]{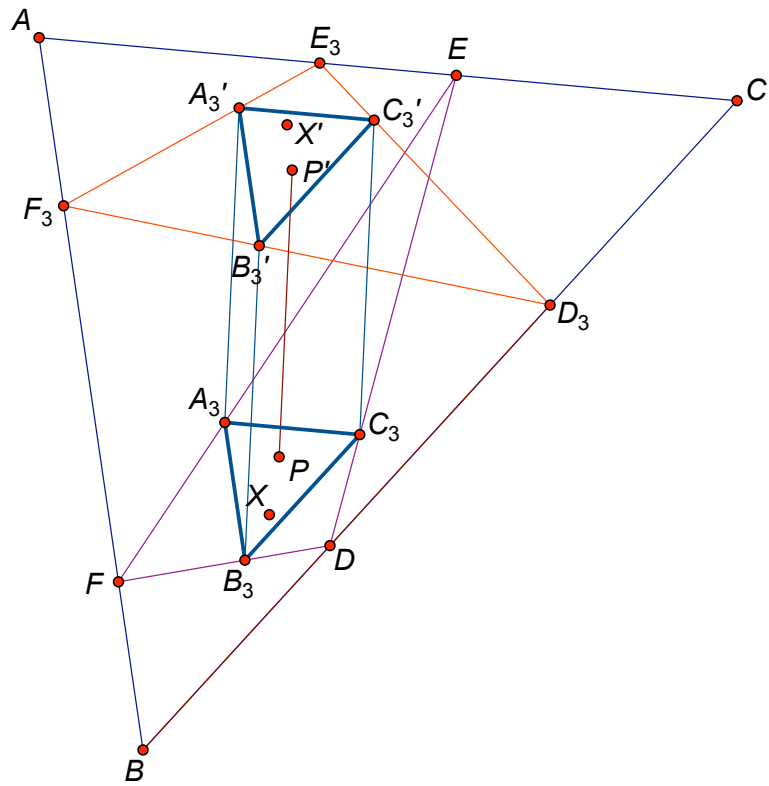}\]
\caption{$A_3B_3C_3 \cong A'_3B'_3C'_3$}
\label{fig:3.3}
\end{figure}

\begin{cor}\label{cor:3.5}
\begin{enumerate}[(a)]
\item The triangles $A_3B_3C_3$ and $A'_3B'_3C'_3$ are always congruent to each other. (See Figure \ref{fig:3.3}.)
\item If $P$ and $P'$ are ordinary, the length $|T_P(P')P|$ of the segment $T_P(P')P$ is equal to $|QQ'||XP|/|XQ'|$. 
\end{enumerate}
\end{cor}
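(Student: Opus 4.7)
For part (a), the plan is to apply Theorem 2.7 directly. I would verify that the commutator $\mathcal{S}' = T_{P'}T_PT_{P'}^{-1}T_P^{-1}$ carries $A_3B_3C_3$ to $A_3'B_3'C_3'$. Since $A_3B_3C_3 = T_P(D_3E_3F_3)$ and $D_3E_3F_3 = T_{P'}(ABC)$, one has $T_{P'}^{-1}T_P^{-1}(A_3B_3C_3) = ABC$; applying $T_P$ yields $DEF$, and then $T_{P'}(DEF) = A_3'B_3'C_3'$. Hence $\mathcal{S}'(A_3B_3C_3) = A_3'B_3'C_3'$, and because $\mathcal{S}'$ is a translation by Theorem 2.7, it is an isometry, which gives the congruence of the two triangles.

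For part (b), the plan is to exploit the fact that $\mathcal{S}_1 = T_PT_{P'}$ is a homothety with ordinary fixed point $X$, as already used in the proof of Theorem 2.7. From I, Theorems 3.2 and 3.7 we have $T_{P'}(Q') = Q'$ and $T_P(Q') = P$, so $\mathcal{S}_1(Q') = P$; and $T_{P'}(Q) = P'$, so $\mathcal{S}_1(Q) = T_P(P')$. Letting $r$ denote the similarity ratio of the homothety $\mathcal{S}_1$ centered at $X$, the relation $\mathcal{S}_1(Q') = P$ gives $|r| = |XP|/|XQ'|$, and applying $\mathcal{S}_1$ to the pair $(Q,Q')$ yields
\[
|T_P(P')\,P| = |\mathcal{S}_1(Q) - \mathcal{S}_1(Q')| = |r|\cdot|QQ'| = \frac{|XP|}{|XQ'|}\,|QQ'|,
\]
which is the claimed identity.

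The argument is essentially immediate once Theorem 2.7 is in hand; the main step is simply to recognize, within the homothety structure extracted in the proof of Theorem 2.7, that the segment $T_P(P')P$ is the image of $QQ'$ under $\mathcal{S}_1$. The only mild technicality is the degenerate case in which $\mathcal{S}_1$ fails to be a proper homothety, but this is precisely the exceptional situation already handled in Theorem 2.7, so no additional geometric input is required.
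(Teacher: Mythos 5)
Your proposal is correct and follows essentially the same route as the paper: part (a) is exactly the computation $\mathcal{S}'(A_3B_3C_3)=A_3'B_3'C_3'$ combined with Theorem 2.7, and part (b) is precisely the paper's observation that $\mathcal{S}_1(Q')=P$ and $\mathcal{S}_1(Q)=T_P(P')$ for the homothety $\mathcal{S}_1$ with center $X$, which yields the stated length ratio.
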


\begin{proof} Part (a) follows from $\cS'(A_3B_3C_3) = A'_3B'_3C'_3$. Part (b) follows from $\cS_1(Q) = T_1T_2(Q) = T_1(P')$ and $\cS_1(Q') = P$, since $\mathcal{S}_1$ is a homothety with fixed point $X$.
\end{proof}

\begin{section}{The affine map $\lambda$.}
\end{section}

We now set $\la = T_{P'}T_P^{-1}$, the unique affine map taking the cevian triangle $DEF$ for $P$ to the cevian triangle $D_3E_3F_3$ for $P'$. We will show that $\la$ maps the conic $\cC_P$ to itself! \medskip

We use the fact that the diagonal triangle of any quadrangle inscribed in $\cC_P$ is self-polar [cox1, p. 73]. Since $A, B, C$ are on $\cC_P$, if $Y$ is any other point on $\cC_P$, the cevian triangle whose vertices are the traces of $Y$ on the sides of $ABC$ is a self-polar triangle. We apply this to the point $Y = P$, obtaining that $DEF$ is a self-polar triangle.\\

Associated to the vertex $D$ of the self-polar triangle $DEF$ is the harmonic homology $\eta_D$ whose center is $D$ and whose axis is $EF$. We know $P$ and $A$ are collinear with $D$ and $A_4$ (on $EF$) by the Collinearity Theorem. Furthermore, $\eta_D$ maps the conic to itself. Since $P$ and $A$ are on the conic $\cC_P$, the definition of $\eta_D$ implies that $\eta_D(A) = P$ and hence that $(AP, DA_4) = -1$. This proves
\medskip

\begin{prop}\label{prop:3.6}
The point sets $AA_4PD_1, BB_4PE_1, CC_4PF_1$ are harmonic sets.
\end{prop}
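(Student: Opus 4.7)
The plan is to exploit the fact that the cevian triangle $DEF$ of $P$ is self-polar with respect to $\cC_P$, together with the three harmonic homologies associated to its vertices.

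First I would establish self-polarity of $DEF$. Since $A,B,C,P$ all lie on $\cC_P$, the quadrangle $ABCP$ is inscribed in the conic, and its diagonal triangle has vertices $AP\cd BC = D$, $BP\cd CA = E$, $CP\cd AB = F$. Thus $DEF$ coincides with the cevian triangle of $P$, and by the classical fact that the diagonal triangle of an inscribed quadrangle is self-polar with respect to the conic, $DEF$ is self-polar for $\cC_P$.

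Next I would consider the harmonic homology $\eta_D$ with center $D$ and axis $EF$ (the polar of $D$). Because the center is the pole of the axis, $\eta_D$ leaves $\cC_P$ invariant. The line $AD$ passes through the center $D$ and so is fixed setwise by $\eta_D$. By the Collinearity Theorem of Part~I, $AD$ contains $P$ and meets $EF$ at the point $A_4$. On the line $AD$ the only fixed points of $\eta_D$ are $D$ itself and the axis-point $A_4$; hence $\eta_D$ swaps the two intersection points of $AD$ with $\cC_P$, which are $A$ and $P$. Therefore $\eta_D(A)=P$, and by the defining property of a harmonic homology, $(AP,DA_4)=-1$, i.e.\ the four points $A,A_4,P,D_1$ form a harmonic set (with $D_1=D$ in the Part~I notation).

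The other two harmonic sets follow by the same argument applied to the vertices $E$ and $F$ of the self-polar triangle, using the harmonic homologies $\eta_E$ (center $E$, axis $DF$) and $\eta_F$ (center $F$, axis $DE$), together with the corresponding collinearities on the lines $BE$ and $CF$. The main subtlety is checking that $A$ and $P$ are genuinely two distinct intersection points of $AD$ with $\cC_P$ (so that $\eta_D$ swaps them rather than fixing one), and that $A_4\neq D$; both are guaranteed by the hypothesis that $P$ does not lie on the sides of $ABC$ or its anticomplementary triangle, which prevents the line $AD$ from being tangent to $\cC_P$ at $A$ and prevents coincidences among the four harmonic points.
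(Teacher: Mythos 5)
Your proposal is correct and follows essentially the same route as the paper: realize $DEF$ as the diagonal triangle of the inscribed quadrangle $ABCP$, hence self-polar for $\cC_P$, and then use the harmonic homology $\eta_D$ with center $D$ and axis $EF$ to conclude $\eta_D(A)=P$ and $(AP,DA_4)=-1$ (with $D_1=D$). The extra care you take with the non-degeneracy of the four points and with why $\eta_D$ must swap rather than fix $A$ and $P$ is consistent with, and slightly more explicit than, the paper's argument.
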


This is the key fact we use to prove the following.

\begin{thm}\label{thm:3.7} If $P$ does not lie on a median of triangle $ABC$, then the map $\la = T_{P'}T_P^{-1}$ takes the conic $\cC_P$ to itself: $\la(\cC_P) = \cC_P$.
\end{thm}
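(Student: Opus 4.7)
The plan is to exhibit five points in general position lying on both $\cC_P$ and $\la(\cC_P)$; since a conic is determined by five such points, this will force the two to coincide. The central observation is that $\la$ carries the self-polar triangle $DEF$ of $P$ to the cevian triangle $D_3E_3F_3$ of $P'$: since $T_P(A)=D$ and $T_{P'}(A)=D_3$, we have $\la(D)=T_{P'}(T_P^{-1}(D))=T_{P'}(A)=D_3$, and similarly $\la(E)=E_3$, $\la(F)=F_3$. Because $P'$ also lies on $\cC_P$, the argument in the paragraph preceding Proposition \ref{prop:3.6} (applied now with $Y=P'$) shows that $D_3E_3F_3$ is self-polar with respect to $\cC_P$, so the harmonic homologies $\eta_{D_3}$, $\eta_{E_3}$, $\eta_{F_3}$ with those vertices as centers and the opposite sides as axes each leave $\cC_P$ invariant. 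Moreover, since $\eta_{D_3}$ preserves $\cC_P$ and its center $D_3$ lies on the line $AP'$, it must swap the two intersections $A$ and $P'$ of that line with the conic; analogously $\eta_{E_3}(B)=P'$ and $\eta_{F_3}(C)=P'$.

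Next I would invoke the general fact that conjugation of a harmonic homology by a collineation yields another harmonic homology whose center and axis are the images of the originals. Applied to $\la$ and $\eta_D$ (center $D$, axis $EF$), this gives
\[
\la\circ\eta_D\circ\la^{-1}=\eta_{D_3},
\]
since $\la(D)=D_3$ and $\la(EF)=E_3F_3$. Combining this identity with the invariance $\eta_D(\cC_P)=\cC_P$ yields $\eta_{D_3}(\la(\cC_P))=\la(\cC_P)$, so $\eta_{D_3}$ preserves $\la(\cC_P)$; the same reasoning gives the analogous statements for $\eta_{E_3}$ and $\eta_{F_3}$.

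Finally, since $\la(P)=T_{P'}(Q')=Q'$ and $\la(Q)=T_{P'}(Q)=P'$, both $Q'$ and $P'$ lie on $\la(\cC_P)$. Applying the involution $\eta_{D_3}$ to $P'\in\la(\cC_P)$ produces $A\in\la(\cC_P)$, and similarly $B,C\in\la(\cC_P)$. Thus $A,B,C,P',Q'$ are five common points of $\cC_P$ and $\la(\cC_P)$; the hypothesis of Theorem \ref{thm:3.1} excludes $P'$ and $Q'$ from the sides of $ABC$, so no three of these are collinear, and the two conics must coincide. The main conceptual step — and the one requiring the most care — is the conjugation identity $\la\eta_D\la^{-1}=\eta_{D_3}$, which transfers the involutions of one self-polar triangle to those of the other and thereby lets the self-polarity of $D_3E_3F_3$ act simultaneously on $\cC_P$ and on $\la(\cC_P)$.
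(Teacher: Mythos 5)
Your proof is correct, and it reaches the conclusion by a genuinely different mechanism from the paper's. The paper also reduces to showing that five points of $\cC_P$ map to points of $\cC_P$ (it works with $\la^{-1}$ and the points $A,B,C,P',Q'$), but its key step is an explicit harmonic-set computation: starting from the anticevian triangle $A'B'C'=T_{P'}^{-1}(ABC)$ of $Q$, it applies Proposition~\ref{prop:3.6} to obtain the harmonic set $A'D_2QA$, pushes it forward by $T_P$ to the harmonic set $T_P(A')A_2QD$, and concludes that $\la^{-1}(A)=T_P(A')=\eta_D(Q)$ lies on $\cC_P$. You instead exploit the self-polarity of the cevian triangle $D_3E_3F_3$ of $P'$ together with the conjugation identity $\la\eta_D\la^{-1}=\eta_{D_3}$, which transfers the invariance $\eta_D(\cC_P)=\cC_P$ into the invariance of $\la(\cC_P)$ under $\eta_{D_3}$ and then yields $A=\eta_{D_3}(P')=\eta_{D_3}(\la(Q))\in\la(\cC_P)$. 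Unwinding your argument gives $\la^{-1}(A)=\la^{-1}\eta_{D_3}\la(Q)=\eta_D(Q)$, the same intermediate identity the paper derives, so the two proofs meet at the same point; yours gets there by conjugating harmonic homologies rather than by transporting a harmonic range through $T_P$, and so avoids the anticevian triangle and the facts about $D_2$ and $A_2$ entirely. What the paper's computation buys is the explicit identification $\la^{-1}(A)=T_P(A')$, which it reuses in Theorems~\ref{thm:3.8} and~\ref{thm:3.9}. Two small points to tighten in your write-up: the claim that $\eta_{D_3}$ swaps $A$ and $P'$ needs the (easy) observation that neither point is the center $D_3$ or lies on the axis $E_3F_3$, so neither is fixed; and the cleanest reason that no three of $A,B,C,P',Q'$ are collinear is simply that all five lie on the nondegenerate conic $\cC_P$, which also disposes of triples such as $A,P',Q'$ that the hypothesis about the sides of $ABC$ does not directly address.
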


\begin{proof}
Consider the anticevian triangle of $Q$, which, by I, Corollary 3.11,  is $A'B'C'=T_{P'}^{-1}(ABC)$. The cevian triangle for $Q$ with respect to this triangle is $ABC$, so $Q$ plays the role of $P$ in the above discussion and $ABC$ plays the role of $DEF$. We have that $AQ\cd BC = D_2$, so Proposition \ref{prop:3.6} implies that $A'D_2QA$ is a harmonic set. Now we map this set by $T_P$. We have that $T_P(A'D_2QA) = T_P(A')A_2QD$, so $T_P(A')$ is the harmonic conjugate of $Q$ with respect to $D$ and $A_2$. But $A_2$ is on $EF$, so the above discussion implies that $\eta_D(Q) = T_P(A')$. Since $Q$ is on $\cC_P$, so is $T_P(A') = T_PT_{P'}^{-1}(A) = \la^{-1}(A)$. Similar arguments apply to the other vertices, so $\la^{-1}$ maps $A, B, C$ to points on $\cC_P$. Now it is easy to see that $\la^{-1}(P') = Q$ and $\la^{-1}(Q') = P$, using I, Theorems 3.2 and 3.7. Hence $\la^{-1}$ maps 5 points on $\cC_P$ to 5 other points on $\cC_P$, so we must have $\la^{-1}
 (\cC_P) = \cC_P$. This implies the assertion.
\end{proof}

If $P$ is ordinary and not on $\iota(\li)$, then the map $\la = T_{P'}T_P^{-1} = \eta T_P \eta T_P^{-1}$ is the product of $\eta$ and $\eta\la = T_P\eta T_P^{-1}$, both of which are involutions (harmonic homologies) on the extended plane, and both of which fix the conic $\cC_P$. The involution $T_P\eta T_P^{-1}$ interchanges $P$ and $Q$. Its axis of fixed points is the line $T_P(GV)$ and its center $T_P(V_\infty) = T_P(PP'\cd QQ')$ lies on $T_P(QQ') = PQ$, so that the midpoint of segment $PQ$ lies on $T_P(GV)$.\\

The map $T_P\eta T_P^{-1}$ is the map corresponding to $\eta$ for the conic $T_P(\cC_P) = T_P(ABCQ'Q) = DEFPQ$, which also lies on the points $T_P(P)$ and $T_P(P')$. Since $Q$ is the complement of $T_P(P')$ with respect to the triangle $DEF$ (I, Corollary 3.3), the conic $DEFPQ$ equals $\cC_R$ for triangle $DEF$ and the point $R = T_P(P)$. This is because the cevian triangle for $T_P(P)$ in $DEF$ is $A_1B_1C_1$, the cevian triangle for $T_P(P')$ in $DEF$ is $A_3B_3C_3$, and the points $T_P(P)$ and $T_P(P')$ are isotomic conjugates with respect to $DEF$.\\

{\bf Remark.} It is easy to show that $\eta \la \eta = \la^{-1}$. Moreover, the map $\la$ is never a projective homology. (Hint: show $T_P(V_\infty)$ is never on the line $GV$ and use [cox2, p. 56, ex. 4].)\\

The last theorem has several interesting consequences.

\begin{thm}\label{thm:3.8} If $P$ does not lie on a median of triangle $ABC$, then the 6 vertices of the anticevian triangles for $Q$ and $Q'$ (with respect to $ABC$) lie on the conic $T_P^{-1}(\cC_P) = T_{P'}^{-1}(\cC_P)$, along with the points $Q$ and $Q'$. This conic is $\cC_Q = A'B'C'QQ'$ for the anticevian triangle of $Q$, which is the same as $\cC_{Q'}$ for the anticevian triangle of $Q'$. Moreover, the vertices of the anticevian triangle of $R$ with respect to $ABC$, for any point $R$ on $T_P^{-1}(\cC_P)$, lie on $T_P^{-1}(\cC_P)$.
\end{thm}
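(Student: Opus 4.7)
The plan is to leverage Theorem~\ref{thm:3.7} to reduce this to known facts about $\cC_P$. First, since $\la = T_{P'}T_P^{-1}$ fixes $\cC_P$ by Theorem~\ref{thm:3.7}, applying $T_{P'}^{-1}$ to both sides of $\la(\cC_P)=\cC_P$ immediately gives $T_P^{-1}(\cC_P)=T_{P'}^{-1}(\cC_P)$; call this common conic $\cC$. As noted in the proof of Theorem~\ref{thm:3.7}, I, Corollary~3.11 identifies the anticevian triangle of $Q$ with respect to $ABC$ as $A'B'C'=T_{P'}^{-1}(ABC)$; the symmetric argument with $P$ and $P'$ interchanged identifies the anticevian triangle of $Q'$ as $T_P^{-1}(ABC)$. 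Since $A,B,C$ lie on $\cC_P$, their images under $T_{P'}^{-1}$ and $T_P^{-1}$---namely the six anticevian vertices---automatically lie on $\cC$. Moreover $T_P(Q)=Q$ and $T_P(Q')=P$ (I, Theorems~3.2 and~3.7), so $Q=T_P^{-1}(Q)\in\cC$ and $Q'=T_P^{-1}(P)\in\cC$.

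To identify $\cC$ with $\cC_Q$ for the triangle $A'B'C'$, I would apply Theorem~\ref{thm:3.1} to the pair $(A'B'C',Q)$: the cevian triangle of $Q$ in $A'B'C'$ is $ABC$, so the conic $\cC_Q$ defined for this pair passes through $A',B',C',Q$ together with the isotomic conjugate of $Q$ and its isotomcomplement computed inside $A'B'C'$. The key subclaim is that one of these further $\cC_Q$-points coincides with $Q'$ (or with one of the six anticevian vertices already placed on $\cC$); this gives five points common to $\cC$ and $\cC_Q$, and a conic is determined by any five of its points, so $\cC=\cC_Q$. The identification $\cC=\cC_{Q'}$ for the anticevian triangle of $Q'$ follows by swapping the roles of $P$ and $P'$.

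For the final assertion, let $R\in\cC$ be arbitrary and set $S=K^{-1}(R)$, so that $R=K(S)$. The derivation used in I, Corollary~3.11 to identify the anticevian of $Q=K(P')$ as $T_{P'}^{-1}(ABC)$ applies verbatim to show that the anticevian of $R=K(S)$ with respect to $ABC$ is $T_S^{-1}(ABC)$. It therefore suffices to prove that $T_S^{-1}(A),T_S^{-1}(B),T_S^{-1}(C)$ lie on $\cC$, equivalently that $T_PT_S^{-1}$ sends $A,B,C$ into $\cC_P$. I would do this via Corollary~\ref{cor:3.1}(b), which characterizes $\cC_P$ as the locus of $Y$ with $P$, $Y$, $T_P(Y)$ collinear: for each vertex the required collinearity should reduce, using the relationship between $T_S$, $T_P$, and $K$, to the hypothesis that $R\in\cC$---that is, to the collinearity of $P$, $T_P(R)$, and $T_P^{2}(R)$ on $\cC_P$.

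The main obstacle will be the subclaim in the second paragraph (explicitly matching $Q'$ with one of the companion points generated by Theorem~\ref{thm:3.1} in the $(A'B'C',Q)$ setup), and the collinearity verification in the last paragraph, where one must carefully exploit how $T_S$ twists under conjugation by $T_P$ and $K$. Once these are handled, the remaining steps are routine applications of the machinery already in place.
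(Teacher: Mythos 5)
Your first paragraph matches the paper's argument: the paper routes the six vertices through $\la^{-1}(A)=T_PT_{P'}^{-1}(A)\in\cC_P$ from the proof of Theorem~\ref{thm:3.7}, which is the same computation as your direct observation that $T_{P'}^{-1}$ and $T_P^{-1}$ carry points of $\cC_P$ onto the common conic $\cC=T_P^{-1}(\cC_P)=T_{P'}^{-1}(\cC_P)$; the placement of $Q$ and $Q'$ via $T_P^{-1}(Q)=Q$ and $T_P^{-1}(P)=Q'$ is verbatim the paper's. For the identification with $\cC_Q$, the ``key subclaim'' you isolate --- that the isotomcomplement of $Q$ computed inside $A'B'C'$ is $Q'$ --- is exactly what the paper invokes as I, Theorem~3.12. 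You correctly locate the crux but leave it unproven, so that step is open in your write-up (though it is a citable prior result rather than a flaw in strategy).

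The genuine gap is in your last paragraph. The paper's proof of the final assertion is purely projective: since $DEF$ is self-polar for $\cC_P$, the triangle $ABC=T_P^{-1}(DEF)$ is self-polar for $T_P^{-1}(\cC_P)$, and one then quotes the classical fact that for any point $R$ on a conic, the vertices of its anticevian triangle with respect to a self-polar triangle lie on the conic (each vertex is the image of $R$ under the harmonic homology centered at a vertex of $ABC$ with axis the opposite side, and such homologies preserve the conic). Your route --- writing the anticevian triangle of $R$ as $T_S^{-1}(ABC)$ with $S=K^{-1}(R)$ and checking membership via Corollary~\ref{cor:3.1}(b) --- has two problems. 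First, it only applies when $S$ satisfies the standing genericity hypotheses (not on the sides of $ABC$ or its anticomplementary triangle, $\iota(S)$ defined, and so on), whereas the assertion is for \emph{any} $R$ on the conic. Second, and more seriously, the ``relationship between $T_S$, $T_P$, and $K$'' on which your reduction hinges does not exist in the available machinery: for two unrelated points $S$ and $P$ there is no identity connecting $T_S$ and $T_P$, so the collinearity of $P$, $T_PT_S^{-1}(A)$, and $T_P^2T_S^{-1}(A)$ cannot be reduced to $R\in\cC$ by anything established so far; you would effectively have to rebuild the self-polarity argument in disguise. Replace the last paragraph with the self-polar-triangle argument, which is shorter and hypothesis-free.
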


\begin{proof} In the proof of the previous theorem we showed that $T_P(A') = \la^{-1}(A)$ lies on the conic $\cC_P$, so $A'$ lies on the conic $T_P^{-1}(\cC_P)$, as do $B'$ and $C'$, the other vertices of the anticevian triangle for $Q$. Similarly, the vertices of the anticevian triangle for $Q'$ lie on the conic $T_{P'}^{-1}(\cC_P)$. But these two conics are the same conic, since $T_{P'}T_P^{-1}(\cC_P) = \cC_P$ implies $T_P^{-1}(\cC_P) = T_{P'}^{-1}(\cC_P)$. That $Q$ and $Q'$ also lie on this conic follows from $T_P^{-1}(Q) = Q$ and $T_P^{-1}(P) = Q'$. Theorem 3.12 of Part I implies the second assertion. Since the triangle $DEF$ is self-polar for $\cC_P$, it follows that $ABC$ is a self-polar triangle for $T_P^{-1}(\cC_P)$. The last assertion of the theorem follows from the general projective fact, that for any point $R$ on a conic $\cC$, the vertices of the anticevian triangle of $R$, with respect to any self-polar triangle for the conic, also lie on $\cC$. (See [cox2, p. 91
 , ex. 3].)
\end{proof}

Note that the map $\eta$ fixes the conic $T_P^{-1}(\cC_P)$, since $\eta T_P^{-1}(\cC_P) = T_{P'}^{-1} \eta(\cC_P) = T_{P'}^{-1}(\cC_P)$.\\
\\
{\bf Remark.} If $P$ is the orthocenter of $ABC$, then $Q'$ is the circumcenter, $Q$ is the symmedian point [gr1, Thm. 7], and $P'$ is the point $X(69)$, the symmedian point of the anticomplementary triangle (see \cite{ki}).  In this case, the anticevian triangle of $Q$ is the tangential triangle \cite{wo}, so Theorem \ref{thm:3.8} implies that $T_P^{-1}(\cC_P)$ is the Stammler hyperbola [y3, p. 21].

\begin{thm}\label{thm:3.9}
Assume that the ordinary point $P$ does not lie on a median of triangle $ABC$ or on $\iota(\li)$.
\begin{enumerate}[(a)]
\item The last-named points of Theorem \ref{thm:3.1} are
\[A_0P\cd D_0Q' = \la^{-1}(A), \ B_0P\cd E_0Q' = \la^{-1}(B), \ C_0P\cd F_0Q' = \la^{-1}(C);\]
\[A'_0P'\cd D_0Q = \la(A), \ B'_0P'\cd E_0Q = \la(B), \ C'_0P'\cd F_0Q = \la(C).\]
\item Moreover, the lines $A_0P, D_0Q', DQ, A'_3P'$ are concurrent at the point $\la^{-1}(A)$, and $A'_0P', D_0Q, D_3Q', A_3P$ are concurrent at the point $\la(A)$, with similar statements holding for the other points in (a).
\end{enumerate}
\end{thm}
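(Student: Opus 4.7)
My plan is to establish part (b) directly; part (a) then falls out by taking the intersection of any two of the four concurrent lines. I will detail the argument for the four lines meeting at $\la^{-1}(A)$. The case of $\la(A)$ is entirely analogous under the symmetry $(P,Q)\leftrightarrow(P',Q')$, and the statements for $B,C$ follow by relabeling vertices.

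The engine is the identification already extracted inside the proof of Theorem~\ref{thm:3.7}: $\la^{-1}(A)=T_P(A')$, where $A'$ is the vertex of the anticevian triangle $A'B'C'$ of $Q$ opposite $A$, and this point lies on $\cC_P$. Two collinearities involving $A'$ are immediate: $A,A',Q$ are collinear because $A$ is the trace on $B'C'$ of the cevian from $A'$ through $Q$; and $A',D_0,Q'$ are collinear by Remark~1 following Theorem~\ref{thm:3.1}, which identifies $D_0Q'$ as the cevian for $Q'$ from the vertex $A'$ of $A'B'C'$. Applying the affine map $T_P$, which satisfies $T_P(A)=D$, $T_P(Q)=Q$, $T_P(D_0)=A_0$, and $T_P(Q')=P$, to each of these two lines gives $\la^{-1}(A)\in DQ$ and $\la^{-1}(A)\in A_0P$ at a single stroke.

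The remaining two memberships follow from Corollary~\ref{cor:3.1}(a) with $Y=\la^{-1}(A)\in\cC_P$. The relation $T_P(Q'Y)=PY$, combined with $\la^{-1}(A)\in A_0P$, forces $Q'Y=T_P^{-1}(A_0P)=D_0Q'$, so $\la^{-1}(A)\in D_0Q'$. The analogous relation $T_{P'}(QY)=P'Y$, combined with $\la^{-1}(A)\in DQ$, yields $P'Y=T_{P'}(DQ)=A'_3P'$ (using $T_{P'}(D)=A'_3$ and $T_{P'}(Q)=P'$), so $\la^{-1}(A)\in A'_3P'$. The non-degeneracy $\la^{-1}(A)\notin\{P,Q',Q,P'\}$ needed to invoke Corollary~\ref{cor:3.1}(a) follows from the standing hypotheses that $P$ is ordinary, off the medians, and off $\iota(\li)$. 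The whole argument is conceptually short --- two collinearities pushed through $T_P$, then Corollary~\ref{cor:3.1}(a) invoked twice --- so the main obstacle is purely bookkeeping: keeping the images $T_P(D_0)=A_0$, $T_P(D_3)=A_3$, $T_{P'}(D_0)=A'_0$, $T_{P'}(D)=A'_3$, and the fixed-point relations $T_P(Q)=Q$, $T_{P'}(Q')=Q'$ straight, and verifying that the mirror version for $\la(A)$ uses the corresponding collinearities $A,A'',Q'$ and $A'',D_0,Q$ at the vertex $A''$ of the anticevian triangle of $Q'$.
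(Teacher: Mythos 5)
Your proposal is correct in structure and follows essentially the same route as the paper: both arguments rest on the two collinearities $A'\in AQ$ and $A'\in D_0Q'$ of the anticevian vertex $A'=T_{P'}^{-1}(A)$, pushed through $T_P$, combined with the Steiner description of $\cC_P$ (your appeal to Corollary \ref{cor:3.1}(a) is exactly what the paper invokes via ``the proof of Theorem \ref{thm:3.1}''). The only real difference is organizational: the paper argues that two candidate points are each the second intersection of a given line with the conic and hence coincide, while you argue that a single point lies on four lines. The one place you are genuinely thinner than the paper is the non-degeneracy step. The claims $\la^{-1}(A)\ne P$ and $\la^{-1}(A)\ne Q$ do follow immediately, since $\la(P)=T_{P'}(Q')=Q'$ and $\la(Q)=T_{P'}(Q)=P'$ and neither $Q'$ nor $P'$ is a vertex; but $\la^{-1}(A)\ne Q'$ and $\la^{-1}(A)\ne P'$ do not visibly ``follow from the standing hypotheses,'' and the paper spends several lines on precisely this kind of verification (e.g.\ the argument via $X'$ showing $DQ\cd A'_3P'\ne Q$, and the observation that $G$ is not on $D_0Q'$). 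You can patch this cheaply: if $\la^{-1}(A)$ happened to equal $Q'$ (resp.\ $P'$), the desired membership in $D_0Q'$ (resp.\ $A'_3P'$) is trivially true, so Corollary \ref{cor:3.1}(a) is only needed in the cases where it is applicable; and for part (a) one still needs to note that the two lines being intersected are distinct (if $A_0P=D_0Q'$ then $G\in PQ'$ forces $Q'$, hence $P$, onto the median $AG$). As written, however, that sentence is an assertion rather than a proof.
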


\begin{proof} From the proof of Theorem \ref{thm:3.7} we have that $\la^{-1}(A)$ is on the intersection of the line $DQ$ with the conic $\cC_P$ and is distinct from $Q$. The proof of Theorem \ref{thm:3.1} together with $T_{P'}(DQ) = T_{P'}(D'_3Q) = A'_3P'$ shows that $DQ\cd A'_3P'$ is also on $\cC_P$. Now $DQ\cd A'_3P'$ is not the point $Q$; otherwise $Q$ would lie on $A'_3P'$, which would imply the point $X'$ is on $A'_3P'$, since $X'$ is collinear with $Q$ and $P'$ (I, Theorem 3.8). However, the point $A$ is on $X'A'_3$ by the definition of $X'$ (I, Theorem 3.5), so $Q$ and $P'$ would be collinear with $A$. This contradicts the assumption that $P$ is not on $AG$. Hence, $DQ\cd A'_3P' = \la^{-1}(A)$, since $DQ$ intersects the conic in exactly two points.\smallskip

On the other hand, I, Corollary 3.11 and Theorems 3.12 and 2.4 give that the point $A' = T_{P'}^{-1}(A)$ is on both lines $AQ$ and $D_0Q'$, so we have that
\[\la^{-1}(A) = T_PT_{P'}^{-1}(A) = T_P(AQ\cd D_0Q') = DQ\cd A_0P.\]
Note that $DQ\cd A_0P \ne P$ since $Q$ does not lie on $PD = AP$ (see the proof of Theorem 2.1). Finally, $A_0P\cd D_0Q'$ is on $\cC_P$, by Theorem \ref{thm:3.1}, and this intersection is not $P$, since $P$ and $Q'$ are collinear with $G$ and $G$ is not on $D_0Q'$. Hence, $A_0P\cd D_0Q' = \la^{-1}(A)$. Therefore, the lines $A_0P, D_0Q', DQ$, and $A'_3P'$ meet at the point $\la^{-1}(A)$. This proves the first assertion in (b). The second assertion follows immediately upon reversing the roles of $P$ and $P'$. This gives two of the equalities in part (a), and the others follow by the same reasoning applied to the other vertices.
\end{proof}

\begin{cor} \label{cor:3.9}  Under the assumptions of the theorem, the two quadrangles $PQQ'P'$ and $A_0DD_0A_3'$ are perspective, as are quadrangles $PQQ'P'$ and $A_3D_0D_3A_0'$.  (See Part I, Figure 6, where $Y=\lambda(A)$.)
\end{cor}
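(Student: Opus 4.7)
The plan is to observe that this corollary is an almost direct reinterpretation of Theorem \ref{thm:3.9}(b): all four lines listed there as concurrent are precisely the lines joining corresponding vertices of the quadrangles in question, once we match them up in the correct order. So the main task is really just bookkeeping — identifying the pairing of vertices and pointing to the right concurrency point.

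First I would handle the pair $PQQ'P'$ and $A_0DD_0A_3'$. The natural correspondence of vertices, read left-to-right, is $P\leftrightarrow A_0$, $Q\leftrightarrow D$, $Q'\leftrightarrow D_0$, and $P'\leftrightarrow A_3'$. The corresponding joins are therefore the lines $PA_0$, $QD$, $Q'D_0$, $P'A_3'$, which are exactly the four lines that Theorem \ref{thm:3.9}(b) asserts to be concurrent at $\lambda^{-1}(A)$. Thus the two quadrangles are perspective from the point $\lambda^{-1}(A)$.

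For the second pair $PQQ'P'$ and $A_3D_0D_3A_0'$, I would use the same recipe. The correspondence is $P\leftrightarrow A_3$, $Q\leftrightarrow D_0$, $Q'\leftrightarrow D_3$, $P'\leftrightarrow A_0'$, and the joins are $PA_3$, $QD_0$, $Q'D_3$, $P'A_0'$. By the second concurrency statement in Theorem \ref{thm:3.9}(b), these four lines all pass through $\lambda(A)$, so the quadrangles are perspective from $\lambda(A)$. There is essentially no obstacle here: the content of the corollary is entirely packaged inside Theorem \ref{thm:3.9}(b), and the only thing to be careful about is making the pairing of vertices explicit so that the reader sees why the four listed concurrencies really do realize the two claimed perspectivities, one from $\lambda^{-1}(A)$ and the other from $\lambda(A)$.
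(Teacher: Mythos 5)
Your proposal is correct and matches the paper's intent exactly: the paper states this corollary without proof precisely because it is an immediate restatement of the concurrencies in Theorem \ref{thm:3.9}(b), with the vertex correspondences $P\leftrightarrow A_0$, $Q\leftrightarrow D$, $Q'\leftrightarrow D_0$, $P'\leftrightarrow A_3'$ (center $\lambda^{-1}(A)$) and $P\leftrightarrow A_3$, $Q\leftrightarrow D_0$, $Q'\leftrightarrow D_3$, $P'\leftrightarrow A_0'$ (center $\lambda(A)$), just as you identify them. Nothing further is needed.
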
 
\medskip

\begin{thm} The translation $\cS' = T_{P'}T_PT_{P'}^{-1}T_P^{-1}$ of Theorem \ref{thm:3.5} maps the conic $DEFPQ$ to the conic $D_3E_3F_3P'Q'$. In other words, these two conics are {\it congruent}.
\end{thm}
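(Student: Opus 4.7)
The plan is to recognize both conics as affine images of the cevian conic $\cC_P$ and then exploit Theorem \ref{thm:3.7}, which asserts that $\la = T_{P'}T_P^{-1}$ fixes $\cC_P$. Once this is in hand, the desired equality $\cS'(DEFPQ) = D_3E_3F_3P'Q'$ falls out of a one-line commutator manipulation, and the congruence statement is automatic because $\cS'$ is a translation by Theorem \ref{thm:3.5}.

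First I would identify $T_P(\cC_P)$ with the conic $DEFPQ$. Applying $T_P$ to the named points of $\cC_P$ gives $T_P(A)=D$, $T_P(B)=E$, $T_P(C)=F$, and by I, Theorems 3.2 and 3.7 also $T_P(Q)=Q$ and $T_P(Q')=P$. Under the standing hypothesis the five images $D,E,F,P,Q$ are in sufficiently general position to determine a conic, so $T_P(\cC_P)$ is exactly the conic $DEFPQ$. The analogous computation with $T_{P'}$, using $T_{P'}(Q')=Q'$ and $T_{P'}(Q)=P'$, identifies $T_{P'}(\cC_P)$ with $D_3E_3F_3P'Q'$.

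Next I would use Theorem \ref{thm:3.7}. The relation $\la(\cC_P) = \cC_P$ rewrites as $T_{P'}T_P^{-1}(\cC_P) = \cC_P$, equivalently $T_P^{-1}(\cC_P) = T_{P'}^{-1}(\cC_P)$. Substituting into the definition of $\cS'$ gives
\[
\cS'(T_P(\cC_P)) = T_{P'}T_PT_{P'}^{-1}T_P^{-1}T_P(\cC_P) = T_{P'}T_PT_{P'}^{-1}(\cC_P) = T_{P'}T_PT_P^{-1}(\cC_P) = T_{P'}(\cC_P),
\]
so $\cS'$ carries $DEFPQ$ to $D_3E_3F_3P'Q'$. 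Since Theorem \ref{thm:3.5} says $\cS'$ is a translation, hence a Euclidean isometry, the two conics are congruent.

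There is not much of an obstacle once the correct setup is chosen; the one point requiring a little care is the first step, where one must verify that the hypotheses on $P$ really do force the five image points $D,E,F,P,Q$ (respectively $D_3,E_3,F_3,P',Q'$) to lie on a unique nondegenerate conic. As a byproduct, the two auxiliary images $T_P(P)$ and $T_P(P')$ that the calculation produces confirm the earlier remark (preceding Theorem \ref{thm:3.8}) that the conic $DEFPQ$ passes through these two points as well.
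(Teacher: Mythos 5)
Your proposal is correct and follows essentially the same route as the paper: identify $DEFPQ = T_P(\cC_P)$ and $D_3E_3F_3P'Q' = T_{P'}(\cC_P)$, use $T_P^{-1}(\cC_P) = T_{P'}^{-1}(\cC_P)$ (which the paper quotes from Theorem \ref{thm:3.8}, itself derived from Theorem \ref{thm:3.7} exactly as you do), and cancel in the commutator. The only cosmetic difference is that you spell out the identification of the image conics, which the paper had already recorded in the discussion following Theorem \ref{thm:3.7}.
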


\begin{proof} From Theorem \ref{thm:3.8} we have, again with $T_1=T_P$ and $T_2=T_{P'}$, that
\begin{align*}
\cS'(DEFPQ) &= T_2T_1T_2^{-1}(ABCQ'Q) = T_2T_1(T_2^{-1}(\cC_P)) \cr \\
&= T_2T_1(T_1^{-1}(\cC_P)) = T_2(\cC_P) = D_3E_3F_3P'Q'.
\end{align*}
\end{proof}

\begin{section}{The center $Z$ of $\mathcal{C}_P$.}
\end{section}

In this section we study the center $Z=Z_P$ of the conic $\mathcal{C}_P$, which we will recognize as a generalized Feuerbach point in Part III. \bigskip

\begin{thm}\label{thm:3.11} Assume that the ordinary point $P$ does not lie on a median of $ABC$ or on $\iota(\li)$. Then the center $Z$ of the conic $\cC_P$ is given by $Z = GV\cd T_P(GV)$. If $\cC_P$ is a parabola or an ellipse, $Z$ is the unique fixed point in the extended plane of the affine mapping $\la = T_{P'}T_P^{-1}$. If $\cC_P$ is a hyperbola, the infinite points on the asymptotes are also fixed, and these are the only other invariant points of $\la$.
\end{thm}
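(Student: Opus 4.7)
The plan is to exploit the factorization $\la = \eta_1\eta_2$ recorded in the discussion before the theorem, where $\eta_1 = \eta$ has center $\Vi$ and axis $GV$, while $\eta_2 = T_P\eta T_P^{-1}$ has center $T_P(\Vi)\in\li$ (since $T_P$ is affine) and axis $T_P(GV)$. Both are harmonic homologies preserving $\cC_P$, so in each case the center is the pole of the axis with respect to $\cC_P$; in particular, $T_P(\Vi)$ is the pole of $T_P(GV)$.

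To identify $Z$, Proposition~\ref{prop:3.2}(b) already gives $Z \in GV$. For the second incidence, pole-polar duality applied to the pair $(T_P(\Vi),\,T_P(GV))$ yields: since $Z$ is the pole of $\li$ and $T_P(\Vi)\in\li$, the point $Z$ lies on the polar of $T_P(\Vi)$, namely $T_P(GV)$. These two lines are distinct, for $GV = T_P(GV)$ would force $\eta_1 = \eta_2$ (two harmonic homologies preserving $\cC_P$ with the same axis share the same center, the common pole), giving $\la = \mathrm{id}$; but $\la = T_{P'}T_P^{-1} = \mathrm{id}$ would force $T_P = T_{P'}$, hence the cevian triangles of $P$ and $P'$ to coincide, hence $P = P' = G$, excluded by hypothesis. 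Thus $Z = GV\cd T_P(GV)$. That $Z$ is fixed by $\la$ is then immediate: $Z$ lies on both axes, so $\eta_1(Z) = Z = \eta_2(Z)$, whence $\la(Z) = Z$.

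For the uniqueness assertions I invoke the standard classification of projectivities of $\mathbb{P}^2$ preserving a nondegenerate conic: any such projectivity that is neither the identity nor a harmonic homology has exactly three fixed points in $\mathbb{P}^2(\mathbb{C})$ (counted with multiplicity) forming an autopolar triangle with respect to the conic. The Remark after Theorem~\ref{thm:3.7} shows $\la$ is never a projective homology, so this classification applies. Because $Z$ is a vertex of the autopolar triangle and is the pole of $\li$, the opposite side is $\li$ itself; the remaining two fixed points must therefore lie in $\li \cd \cC_P$. For an ellipse this intersection is empty over $\mathbb{R}$, giving $Z$ as the unique real fixed point. For a hyperbola it consists of the two asymptote directions, giving three real fixed points. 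For a parabola, $\li$ is tangent to $\cC_P$ at one point $W$, and $Z = W \in \li$; the three fixed points coalesce at $Z$.

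The subtlest step is the parabola case, where one must rule out the possibility that $\la$ acts as an elation fixing all of $\li$ pointwise (i.e., as a translation along the parabola's axis direction), which would yield infinitely many fixed points contrary to the theorem. I would handle this by a Jordan-form analysis of the linear part of $\la$, centered at $Z$, showing its eigenspace is one-dimensional. An analogous subtlety in the hyperbola case---that $\la$ fixes each asymptote direction individually rather than swapping them---follows from the observation that a swap would make $\la^2$ act as the identity on $\cC_P$ and hence make $\la$ an involution, and any involution preserving the conic is a harmonic homology, again contradicting the Remark after Theorem~\ref{thm:3.7}.
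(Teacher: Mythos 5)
Your identification of $Z$ is correct and is actually a cleaner route than the paper's: from Proposition~\ref{prop:3.2}(b) you get $Z\in GV$, and since $\eta_2=T_P\eta T_P^{-1}$ is a harmonic homology preserving $\cC_P$, its center $T_P(\Vi)$ is the pole of its axis $T_P(GV)$, so $Z=\mathrm{pole}(\li)$ lies on the polar of $T_P(\Vi)\in\li$, i.e.\ on $T_P(GV)$; the distinctness of the two axes is handled adequately. (The paper instead obtains $Z=GV\cd T_P(GV)$ from its fixed-point analysis when $Z$ is ordinary and needs a separate, lengthy contradiction argument when $Z$ is infinite, so this is a genuine simplification.)

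The uniqueness half, however, has real gaps. First, the classification you invoke is false as stated: a non-identity, non-perspective collineation preserving a conic has fixed points consisting of the (at most two) fixed points of its restriction to the conic together with the pole of their join; these do \emph{not} form an autopolar triangle, since two of them lie on the conic (a rotation of a circle fixes the center and the two circular points, and the tangent at a circular point is not the join of the other two fixed points). Your deduction is also internally inconsistent: vertices of an autopolar triangle never lie on the conic, yet you place two of your fixed points on $\li\cd\cC_P$. The corrected classification does rescue the ellipse and hyperbola cases, but only after ruling out that $\la$ is a perspective collineation; the Remark after Theorem~\ref{thm:3.7} addresses homologies, not elations whose axis is tangent to $\cC_P$ (for a hyperbola such an elation would have an asymptote through $Z$ as a pointwise-fixed axis). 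The paper sidesteps all of this: from $\la=\eta_1\eta_2$, any ordinary fixed point $R$ satisfies $\eta_1(R)=\eta_2(R)$, and if these differ from $R$ the segment would be parallel to both $PP'$ and $PQ$; hence $R$ lies on both axes, giving at most one ordinary fixed point outright. Second, your argument that $\la$ does not swap the asymptote directions is a non sequitur: $\la^2$ fixing the two points of $\li\cd\cC_P$ does not make $\la^2$ the identity on $\cC_P$. Third, and most seriously, the parabola case is not proved. ``A Jordan-form analysis showing the eigenspace is one-dimensional'' is precisely the assertion to be established, and it is not a general fact about parabola-preserving affine maps: $(x,y)\mapsto(2x,4y)$ preserves $y=x^2$ and $\li$ and has three fixed points. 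Excluding extra fixed points of $\la$ requires the specific geometric input the paper supplies, namely that $\la$ is not the identity on $\li$ because $PQ\cd P'Q'=V$ is ordinary, and that a second infinite fixed point would force $\Vi$ onto $T_P(GV)=G_1J$, which fails because $J$ is not on the line through $G$ parallel to $PP'$. Without an argument of this kind the central case of the theorem remains open in your write-up.
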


\begin{proof}
Since the map $\la$ leaves invariant the conic and the line at infinity, it fixes the pole of this line, which is $Z$.\smallskip

To prove uniqueness write the map $\la = \eta_1\eta_2$ as the product of the harmonic homologies $\eta_1 = \eta$ and $\eta_2 = \eta\la = T_P\eta T_P^{-1}$ (see the discussion following Theorem 3.2). The center of $\eta_1$ is $V_\infty$, lying on the line $PP'$, and the center of $\eta_2$ is $T_P(V_\infty)$, lying on the line $T_P(QQ') = QP$. If $R$ is any ordinary fixed point of $\la$, then $\eta_1(R) = \eta_2(R) = R'$. If $R$ is distinct from $R'$, this implies that $RR'$ is parallel to both lines $PP'$ and $QP$, which is impossible. If $R = R'$, then $R$ is fixed by both $\eta_1$ and $\eta_2$, so $R$ must be the intersection $GV\cd T_P(GV)$ of the axes of the two maps. This proves that $Z = GV\cd T_P(GV)$ if $Z$ is ordinary.\smallskip

Suppose now that the point $Z \ (=GV\cd \li)$ is an infinite point (so $\cC_P$ is a parabola) and $R$ is another infinite fixed point. Then $\eta_1(R) = \eta_2(R) = R'$ is also an infinite fixed point of $\la$, since $\la \eta(R) = \eta \la^{-1}(R) = \eta(R)$. If $R \ne Z, R'$, then $\la$ fixes three points on $\li$ and is therefore the identity on $\li$. But $\la(PQ) = Q'P'$, and $PQ\cd P'Q'$ is the ordinary point $V$ (Proposition \ref{prop:3.2}f), implying that $PQ$ cannot be parallel to $Q'P'$ and the line at infinity cannot be a range of fixed points. Thus, since $Z$ on $GV$ is fixed by $\eta_1$ we have $R = R'$, implying that $R$ is fixed by $\eta_1$ and $\eta_2$. We know that $V_\infty \ne T_P(V_\infty)$ (because $T_P(QQ') = QP$). Since $R$ is fixed by $\eta_1$ but different from $Z$, then $R = V_\infty$, which must lie on the axis $T_P(GV)$ in order to be fixed by $\eta_2$. Now $T_P(GV)$ lies on the point $G_1 = T_P(G)$ and on the midpoint $J$ of segment $PQ$ (see the comments following Theorem \ref{thm:3.7}). The point $G_1$ lies on the line $GV_\infty$ through $G$ parallel to $PP'$ (Lemma \ref{lem:3.4}), but $J$ is not on $GV_\infty$, since this line divides the segment $PQ$ in the ratio 2:1. Hence, $T_P(GV) = G_1J$ cannot possibly lie on the point $V_\infty$. Thus $R = Z$ and $Z$ is the only infinite invariant point of $\la$.\smallskip

We have already shown that the only possible ordinary fixed point of $\la$ is the point $Y = GV\cd T_P(GV)$. We now show this point is infinite when $Z$ is infinite. Assume $Y$ is an ordinary point. Then $Y$ is on the line $GV$ with $Z$, so $GV = YZ$ is an invariant line; hence its pole $V_\infty$ is also invariant. Now use the fact that $\la(P) = Q'$ and $\la(Q) = P'$. Since $V_\infty$ is invariant the line $PP'$ must be mapped to the line $QQ'$. Since the conic is also invariant, $P'$ must map to the second point of intersection of $QQ'$ with the conic, which is $Q$. Hence, $\la$ interchanges $P'$ and $Q$, implying that the line $P'Q$ is invariant; hence the infinite point on this line is invariant. This point is clearly not $V_\infty$; it is also not $Z$ because $P'Q\cd GV = G$, so that if $Z$ were on $P'Q$ then $P'Q = GV$, which is not the case since $P'$ and $Q$ are not fixed by the map $\eta$. This shows that $\la$ has three invariant points on the line at infinity, which contradicts the fact that $\la$ is not the identity on $\li$. Therefore, $\la$ has no ordinary fixed point in this case, and we conclude that $Z = GV\cd T_P(GV)$ if $Z$ is infinite. Hence, $Z = GV\cd T_P(GV)$ in all cases.\smallskip

Suppose next that $Z$ is ordinary and $\cC_P$ is a hyperbola. Let $R_1$ and $R_2$ be the infinite points on the two asymptotes. Since the point $V_\infty$ is the pole of the axis $GV$ of the map $\eta_1$, the point $I_1 = GV\cd \li$ is conjugate to $V_\infty$ on $\li$ and $GV = ZI_1$ and $ZV_\infty$ are conjugate diameters. By a theorem in Coxeter [cox1, p. 111, 8.82], these two conjugate diameters are harmonic conjugates with respect to the asymptotes. (Note that $GV$ is never an asymptote: since $GVV_\infty$ is the diagonal triangle of the quadrangle $PQQ'P'$, the pole $V_\infty$ of $GV$ never lies on $GV$.) Hence we have the harmonic relation $H(R_1R_2, I_1V_\infty)$. By the definition of $\eta_1$ this implies that $\eta_1(R_1) = R_2$. In the same way, $T_P(\Vi)$ is the center and $T_P(GV)$ is the axis of the harmonic homology $\eta_2$. Since $\eta_2$ fixes the conic, it fixes the pole of the line $T_P(GV)$ with respect to $\cC_P$; this pole is the center $T_P(\Vi)$ since $T_P(\Vi)$ is the only fixed point of $\eta_2$ off the line $T_P(GV)$. Hence we have the harmonic relation $H(R_1R_2, I_2T_P(\Vi))$, where $I_2 = T_P(GV)\cd \li$, and this implies that $\eta_2(R_1) = R_2$. Therefore, $\la(R_1) = \eta_1\eta_2(R_1) = \eta_1(R_2) = R_1$ and $\la(R_2) = R_2$. There cannot be any other fixed points since $\la$ induces a non-trivial map on the line $\li$.\smallskip

The only case left to consider is the case when $\cC_P$ is an ellipse. In this case $Z$ is an interior point of the conic and any line through $Z$ intersects the conic in two points. Suppose that the infinite point on the line $ZR_1$ is fixed by $\la$, where $R_1$ lies on the conic. As before we have $\eta_1(ZR_1) = \eta_2(ZR_1)$, so $R_2 = \eta_1(R_1)$ lies on the conic, along with $R_3 = \eta_2(R_1)$, and $Z, R_2$, and $R_3$ are collinear. The points $R_2$ and $R_3$ are distinct because $Z$ is the only ordinary fixed point of $\la$. Now line $R_1R_2$ is on the point $\Vi$ and $R_1R_3$ is on the point $T_P(\Vi)$, so $R_1R_2R_3$ is a triangle. Furthermore, $T_P(GV)$ is on the midpoint of segment $R_1R_3$. Since $Z$ is the center, it is the midpoint of the segment $R_2R_3$, and therefore the line $T_P(GV)$, which lies on $Z$ and the midpoint of $R_1R_3$, is parallel to $R_1R_2$. This implies that $T_P(GV)$ lies on $\Vi$, which we showed above is never the case. Therefore, $Z$ is the only fixed point of $\la$ in this case. This completes the proof of the theorem.
\end{proof}

\begin{figure}
\[\includegraphics[width=4.5in]{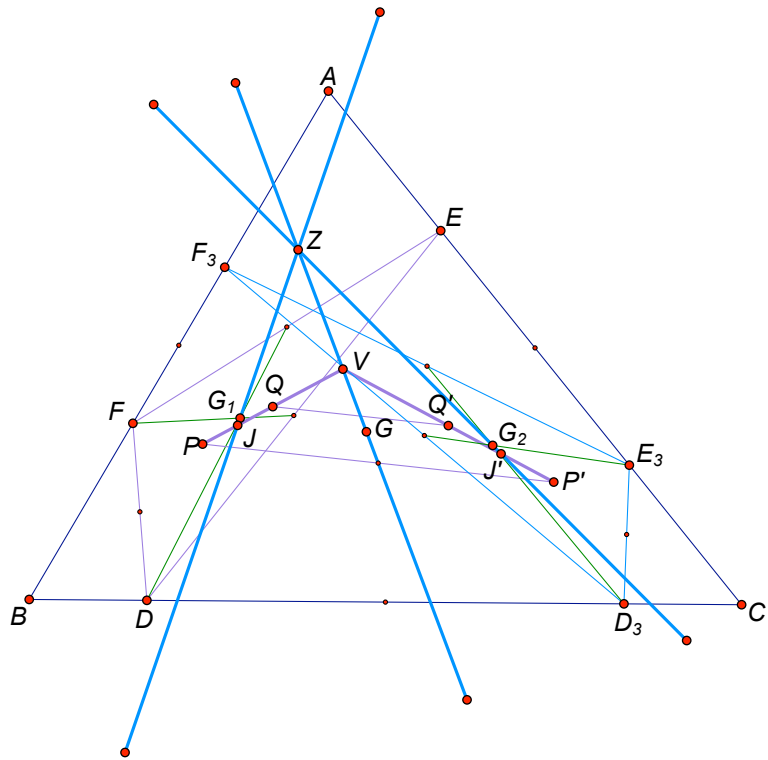}\]
\caption{The center $Z$ of the conic $\cC_P$}
\label{fig:3.4}
\end{figure}

\begin{cor}\label{cor:3.11}
\begin{enumerate}[(a)]
\item The point $Z = GV\cd G_1J$, where $G_1$ is the centroid of the cevian triangle of $P$ and $J$ is the midpoint of segment $PQ$.
\item The lines $GV, T_P(GV) = G_1J$, and $T_{P'}(GV) = G_2J' = \eta(G_1J)$ are concurrent at $Z$. (See Figure \ref{fig:3.4}.)
\end{enumerate}
\end{cor}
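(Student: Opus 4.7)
My plan is to derive both parts directly from Theorem 4.1 ($Z = GV \cdot T_P(GV)$), together with the relation $\eta T_P = T_{P'}\eta$ of Theorem 2.4 and the fact that $\eta(G_1) = G_2$ from Lemma 2.5.

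For part (a), I would identify the line $T_P(GV)$ explicitly. Since $G$ lies on $GV$, its image $T_P(G) = G_1$ lies on $T_P(GV)$. For a second point on $T_P(GV)$, I would use a fact already noted in the paragraph following Theorem 3.2: the harmonic homology $T_P \eta T_P^{-1}$ (which interchanges $P$ and $Q$) has axis $T_P(GV)$, and its center $T_P(V_\infty)$ lies on $T_P(QQ') = PQ$, so the midpoint $J$ of segment $PQ$ lies on $T_P(GV)$. Thus $T_P(GV) = G_1 J$, and part (a) follows from $Z = GV \cdot T_P(GV) = GV \cdot G_1 J$.

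For part (b), the first two lines through $Z$ are immediate. For the third, I would rewrite $T_{P'}$ using the conjugation relation: $T_{P'} = \eta T_P \eta$. Since $GV$ is the axis of the harmonic homology $\eta$, it is fixed pointwise by $\eta$, so
\[T_{P'}(GV) = \eta T_P \eta(GV) = \eta T_P(GV) = \eta(G_1 J).\]
By Lemma 2.5, $\eta(G_1) = G_2$, and since $\eta$ interchanges the pairs $(P,P')$ and $(Q,Q')$, it carries segment $PQ$ to segment $P'Q'$ and so maps the midpoint $J$ to the midpoint $J'$. Therefore $T_{P'}(GV) = G_2 J' = \eta(G_1 J)$, as claimed. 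Finally, concurrence at $Z$ is automatic: $Z$ lies on the axis $GV$ of $\eta$, hence $\eta(Z) = Z$, and from $Z \in G_1 J$ we get $Z = \eta(Z) \in \eta(G_1 J) = G_2 J'$.

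No step should present any real obstacle; the whole argument is essentially a bookkeeping exercise combining Theorem 4.1 with the identities $\eta T_P = T_{P'}\eta$, $\eta(G_1) = G_2$, $\eta(GV) = GV$, and the fact (already observed in Section 3) that $J \in T_P(GV)$. The only point requiring a touch of care is verifying $G_1 \ne J$ so that the line $G_1 J$ is well defined — but this follows since otherwise $T_P(GV)$ would coincide with $GV$, forcing $\lambda = T_{P'} T_P^{-1}$ to fix the entire line $GV$ pointwise, contradicting the uniqueness of the ordinary fixed point $Z$ established in Theorem 4.1.
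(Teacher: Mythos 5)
Your argument is correct and is essentially the paper's intended justification: the corollary is stated without proof precisely because it follows from Theorem~\ref{thm:3.11} combined with the facts already recorded in the discussion after Theorem~\ref{thm:3.7} (that $T_P(GV)$ passes through $G_1=T_P(G)$ and through the midpoint $J$ of $PQ$), Lemma~\ref{lem:3.4}, and the conjugation $\eta T_P=T_{P'}\eta$. The only slip is your aside on $G_1\neq J$ --- the implication ``$G_1=J\Rightarrow T_P(GV)=GV$'' does not follow --- but the fact itself is already established in the proof of Theorem~\ref{thm:3.11}, where it is shown that $G_1$ lies on the line through $G$ parallel to $PP'$ while $J$ does not.
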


\begin{thm}\label{thm:3.12} Assume that the ordinary point $P$ does not lie on a median of $ABC$ but does lie on the Steiner circumellipse $\iota(\li)$. Then the point $Z$ is one-third of the distance from the point $G$ to the point $G_1 = T_P(G)$ on the line $GG_1$ and is the only ordinary fixed point of the mapping $\la$. In this case $\cC_P$ is a hyperbola and the line $l = GG_1$ is an asymptote.
\end{thm}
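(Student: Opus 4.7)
Since $P$ lies on the Steiner circumellipse $\iota(\li)$, the point $P'=\iota(P)$ is at infinity; and since the complement map $K$ is a homothety centered at $G$, it fixes every infinite point, so $Q=K(P')=P'$. My plan is to use the identity $T_PT_{P'}=K^{-1}$ (established in the $P'=Q$ infinite case within the proof of Theorem~\ref{thm:3.5}) to factor
\[
\la = T_{P'}T_P^{-1} = T_P^{-1}K^{-1}T_P^{-1},
\]
then restrict $\la$ to the line $GG_1$, locate the ordinary fixed point, and identify it with the center $Z$ of $\cC_P$.

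From the same proof, $GG_1$ passes through $Q$ (because $GG_1\parallel PP'$), is $T_P$-invariant, and $T_P$ restricts to it as the translation $G\mapsto G_1$; meanwhile $K^{-1}$ is a homothety from $G\in GG_1$ of ratio $-2$ and also preserves $GG_1$. Parametrizing $GG_1$ so that $G\leftrightarrow 0$ and $G_1\leftrightarrow 1$, I get $T_P\colon t\mapsto t+1$, $K^{-1}\colon t\mapsto -2t$, and hence $\la|_{GG_1}\colon t\mapsto 1-2t$; the unique fixed point is $t=\tfrac13$, giving $Z' = G + \tfrac13(G_1-G)$. Since $\la$ preserves both $\cC_P$ (Theorem~\ref{thm:3.7}) and $\li$, it fixes the pole $Z$ of $\li$, which is the center of $\cC_P$.

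It remains to show that $\cC_P$ is a hyperbola (so $Z$ is ordinary), that $Z=Z'$, that $GG_1$ is an asymptote, and that $Z$ is the unique ordinary fixed point of $\la$. By Steiner's construction, the infinite points of $\cC_P$ correspond to the eigendirections of the linear part $M_P$ of $T_P$: the direction $Q$ is one such eigendirection with eigenvalue $1$ (from $T_P$ being translation along $GG_1$), and computing $\det M_P$ from the barycentric area formula $[DEF]/[ABC]=2uvw/((u+v)(v+w)(w+u))$, the identity $(u+v)(v+w)(w+u)=(u+v+w)(uv+vw+wu)-uvw$, and the Steiner-ellipse condition $uv+vw+wu=0$ yields $\det M_P=-2$, so $M_P$ has a second real eigenvalue ($=-2$) and hence a second real eigendirection, giving a second infinite point of $\cC_P$; thus $\cC_P$ is a hyperbola. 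The eigenvalues of $M_\la = -2M_P^{-2}$ are then $-2$ and $-\tfrac12$, both different from $1$, so on any $\la$-invariant line through $Z$ in an eigendirection $\la$ acts as a nontrivial scaling with $Z$ as its unique fixed point; any hypothetical second ordinary fixed point $R$ would lie on such a line, forcing $R=Z$ and hence $Z=Z'$. For the asymptote, pole-polar duality gives that $Z$ lies on the polar of $Q$, which is the tangent to $\cC_P$ at $Q$; since this tangent passes through both $Z$ and $Q$, both of which lie on $GG_1$, it coincides with $GG_1$. The main obstacle I foresee is the $\det M_P = -2$ computation --- equivalently, ruling out the parabola case --- which rests on the specific algebraic content of the Steiner-ellipse condition.
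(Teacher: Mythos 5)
Your proposal is correct, and it agrees with the paper's own proof in its first half: both use the factorization $\la = T_P^{-1}K^{-1}T_P^{-1}$ and the fact (from the proof of Theorem \ref{thm:3.5}) that $T_P$ translates along $l=GG_1$ to locate the fixed point of $\la|_l$ at the point dividing $GG_1$ in ratio $1:3$. Where you genuinely diverge is in everything after that. The paper stays synthetic: it shows that $T_P$ and $K$ each interchange the sides of $l$, hence so does $\la$, which kills any ordinary fixed point off $l$; it then rules out the parabola case by a contradiction with the tangent line at $Z_1$ (if $\cC_P$ were a parabola, $\la$ would preserve the sides of that tangent, yet $\la(G)=G_1$ with $G,G_1$ on opposite sides); and it gets the asymptote from the fact that $l$ passes through the center and meets $\cC_P$ only at $Q$. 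You instead observe that, by Corollary \ref{cor:3.1}(b), the infinite points of $\cC_P$ are exactly the fixed directions of $T_P$, and then compute $\det M_P = [DEF]/[ABC] = -2$ from the barycentric area formula together with the Steiner condition $uv+vw+wu=0$; this gives a second real eigendirection (eigenvalue $-2$), hence a hyperbola, and the eigenvalues $-2,-\tfrac12$ of $M_\la$ give uniqueness of the ordinary fixed point at once, while pole--polar duality identifies $GG_1$ as the tangent at $Q$. Both routes are valid; yours is shorter and makes the mechanism transparent (and the ``infinite points of $\cC_P$ = eigendirections of $T_P$'' observation is a nice reusable fact), but it imports a coordinate computation into an argument the authors deliberately keep coordinate-free, and it leans on $\det M_P=-2$, which is precisely the algebraic shadow of the paper's synthetic side-interchanging and tangent-line arguments.
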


\begin{proof}
This will follow from the discussion in the last paragraph of the proof of Theorem \ref{thm:3.5}, according to which $T_P$ acts as translation on $l = GG_1$ and $\la = T_{P'}T_P^{-1} = T_P^{-1}K^{-1}T_P^{-1}$. First of all, it follows easily from this representation of $\la$ that $\la$ fixes the point $Z_1$ on $l$ for which $GZ_1 : GG_1 = 1:3$, and that this $Z_1$ is the only fixed point of $\la$ on this line.\smallskip

Furthermore, the mapping $T_P$ interchanges the sides of the line $GG_1$. This is because the point $D_2$ is on the line $AQ=AP'=AD_2$, which is parallel to the fixed line $l$; and $A_2 = T_P(D_2)$ is on the opposite side of $l$, because $A_2$ is a vertex of the anticomplementary triangle (I, Corollary 3.14) and the point $G$ lies on segment $AA_2$. Since $K$ interchanges the sides of the line $l$, it follows that the mapping $\la$ has the same property. Thus, $\la$ has no ordinary fixed points off of $l$, and $Z_1$ is its only ordinary fixed point.\smallskip

 On the other hand, $\la$ fixes the center $Z$ of the conic $\cC_P$. If $Z \ne Z_1$, then $Z$ is infinite and $\cC_P$ is a parabola. Now the infinite point $Q$ lies on $l$ and $\cC_P$, so $Z = Q$, and the line $l$ must intersect the parabola in a second point, which would have to be a fixed point of $\la$, since $l$ and $\cC_P$ are both invariant under $\la$. This second point on $l \cap \cC_P$ is therefore the point $Z_1$. Now all the points (other than $Z_1$) of the conic $\cC_P$ lie on one side of the tangent line $l_1$ of $Z_1$. Note that $l \neq l_1$; otherwise $Q$ on $l_1$ would imply $\li$ is on $Z_1$ by the polarity.  If $U$ and $V$ are points on $\cC_P$ on either side of $Z_1$, then they lie on opposite sides of $l$ and the intersection $UV \cdot l$ is mapped to another point on $l$ by $\la$, which must lie on the same side of $l_1$. This would say that $\la$, which fixes the tangent $l_1$, leaves invariant both sides of this line. However,
\[\la(G) = T_P^{-1}K^{-1}T_P^{-1}(G) = T_P^{-1}K^{-1}(G_2) = T_P^{-1}T_P^2(G) = G_1,\]
yet $G$ and $G_1$ are on opposite sides of $l_1$.  This contradiction shows that $Z = Z_1$, proving the first assertion. This implies that $\cC_P$ is a hyperbola and the line $l = GG_1$ is an asymptote, since $l$ is on $Z$ and cannot intersect $\cC_P$ at a point other than $Q$ for the same reason as before - such an intersection would have to be a second ordinary fixed point of $\la$. This completes the proof.
\end{proof}

To view Theorem \ref{thm:3.11} from a different perspective, consider the model of hyperbolic geometry whose points are the interior points of the conic $\cC_P$ and whose lines are the intersections of Euclidean chords on the conic with the interior of $\cC_P$. Consider the case when $\cC_P$ is an ellipse. The involutions $\eta_1 = \eta$ and $\eta_2 = T_P\eta T_P^{-1}$ fix the conic $\cC_P$ and map the interior of $\cC_P$ to itself. Furthermore, the points $R$ and $\eta(R) = R'$ lie on a line ``perpendicular" to $GV$ in this model, since $RR'$ lies on the pole of $GV$. (See Greenberg [gre, p. 309].) Note that $GV$ contains points that are interior to $\cC_P$, because $Z$ is an interior point in this case. Since $\eta$ preserves cross-ratios it is a hyperbolic isometry. Thus, $\eta$ represents reflection across the diameter $GZ$ in this model [gre, pp. 341-343]. The mapping $\eta_2 = \eta \la = T_P \eta T_P^{-1}$ is also a hyperbolic isometry fixing at least two points and thus a reflection across the diameter $T_P(GV) = G_1J = G_1Z$ [gre, p. 412]. Thus, the map $\la = \eta_1\eta_2$ represents a hyperbolic rotation about the point $Z$.\\

When $\cC_P$ is a hyperbola, then $Z$ is an exterior point. If the lines $GV$ and $T_P(GV)$ are secant lines, they both pass through the pole of the line at infinity (also a secant line) and are therefore perpendicular to $\li$ in the hyperbolic model. In this case $\la$ represents a translation along $\li$. If the axis $GV$ does not intersect the conic, then the homology $\eta_1$ interchanges the sides of the line $\li$ (in the model). Since the asymptotes separate the lines $GV$ and $Z\Vi$ (see the proof of Theorem 4.1), the point $\Vi$ is a fixed point of $\eta_1$ in the hyperbolic plane, and all lines through $\Vi$ are invariant. Hence, $\eta_1$ is a half-turn about $\Vi$. If the axis $T_P(GV)$ is a secant line, then $\la$ is an indirect isometry with no fixed points and an invariant line, and is therefore a glide [gre, p. 430]. If neither axis is a secant line, $\la$ is the product of two half-turns and is therefore a translation. When $\cC_P$ is a parabola, $Z$ is a point on the conic
  and therefore an ideal point in the hyperbolic model. In this case every line through $Z$ intersects the conic, so $GV$ and $T_P(GV)$ are secants and $\la$ is a parallel displacement [gre, p. 424].
\end{section}

\noindent Dept. of Mathematics, Carney Hall\\
Boston College\\
140 Commonwealth Ave., Chestnut Hill, Massachusetts, 02467-3806\\
{\it e-mail}: igor.minevich@bc.edu
\bigskip

\noindent Dept. of Mathematical Sciences\\
Indiana University - Purdue University at Indianapolis (IUPUI)\\
402 N. Blackford St., Indianapolis, Indiana, 46202\\
{\it e-mail}: pmorton@math.iupui.edu


\begin{thebibliography}{WWW}

\bibitem[ac]{ac} N. Altshiller-Court, {\it College Geometry, An Introduction to the Modern Geometry of the Triangle and the Circle}, Barnes and Noble, New York, 1952. Reprint published by Dover.

\bibitem[cox1]{cox1} H.S.M. Coxeter, {\it The Real Projective Plane}, McGraw-Hill Book Co., New York, 1949.

\bibitem[cox2]{cox2} H.S.M. Coxeter, {\it Projective Geometry}, $2^{nd}$ edition, Springer, 1987.

\bibitem[cox3]{cox3} H.S.M. Coxeter, {\it Introduction to Geometry}, John Wiley \& Sons, Inc., New York, 1969.

\bibitem[ev]{ev} H. Eves, {\it College Geometry}, Jones and Bartlett Publishers, Boston, 1995.

\bibitem[gre]{gre} M.J. Greenberg, {\it Euclidean and Non-Euclidean Geometries, Development and History}, $4^{th}$ edition, W.H. Freeman and Co., New York, 2008.

\bibitem[ki]{ki} C. Kimberling, {\it Encyclopedia of Triangle Centers}, at http://faculty.evansville.edu/ck6/encyclopedia/ETC.html.

\bibitem[mm1]{mm1} I. Minevich and P. Morton, Synthetic foundations of cevian geometry, I: Fixed points of affine maps in triangle geometry, http://arXiv.org/abs/1504.00210, 2015.

\bibitem[mm3]{mm3} I. Minevich and P. Morton, Synthetic Foundations of Cevian Geometry, III: The generalized orthocenter, http://arXiv.org/abs/1506.06253, 2015.

\bibitem[wo]{wo} S. Wolfram, MathWorld, http://mathworld.wolfram.com.

\bibitem[y1]{y1} P. Yiu, Introduction to the Geometry of the Triangle, 2002, at http://www.math.fau.edu/Yiu/GeometryNotes020402.ps

\bibitem[y2]{y2} P. Yiu, A Tour of Triangle Geometry, at http://www.math.fau.edu/ \\
yiu/TourOfTriangleGeometry/MAAFlorida37040428.pdf.

\end{thebibliography}
\end{document}